\let\@evenhead\relax
\let\@oddhead\relax
\newtheorem{theo}{Theorem}[section]
\newtheorem{cor}[theo]{Corollary}
\newtheorem{lemm}[theo]{Lemma}
\newtheorem{prop}[theo]{Proposition}
\def\C{\mathbb{C}}
\def\R{\mathbb{R}}
\def\H{\mathbb{H}}
\def\GL{{\sf{GL}}}
\def\SO{{\sf{SO}}}
\def\Sp{{\sf{Sp}}}
\def\sl{{\mathfrak{sl}}}
\def\p{{\mathfrak{p}}}
\def\a{{\mathfrak{a}}}
\def\g{{\mathfrak{g}}}
\def\so{{\mathfrak{so}}}
\def\sp{{\mathfrak{sp}}}
\def\h{{\mathfrak{h}}}
\def\i{{\mathfrak{i}}}
\def\q{{\mathfrak{q}}}
\def\l{{\mathfrak{l}}}
\def\m{{\mathfrak{m}}}
\def\su{{\mathfrak{su}}}
\def\Ein{{\mathsf{Eins}}}
\title[]{Pseudo-Conformal actions of semisimple Lie groups}
\date{\today}
\author{Mehdi Belraouti}
\address{Mehdi Belraouti \newline
Facult\'e de Math\'ematiques,\\
USTHB, BP 32, El-Alia,\\
16111 Bab-Ezzouar, Alger (Algeria)}
\email{mbelraouti@usthb.dz}
\author{Mohamed Deffaf}
\address{Mohamed Deffaf \newline
Facult\'e de Math\'ematiques,\\
USTHB, BP 32, El-Alia,\\
16111 Bab-Ezzouar, Alger (Algeria)}
\email{mdeffaf@usthb.dz}
\author{Abdelghani Zeghib}
\address{Abdelghani Zeghib \newline
UMPA, ENS de Lyon, France }
\email{abdelghani.zeghib@ens-lyon.fr}
\begin{document}
\maketitle

\noindent{\bf Abstract.}
We consider the pseudo-Riemannian Lichnerowicz conjecture in the homogeneous setting. In particular, we show that any compact connected pseudo-Riemannian manifold $M$ on which a semisimple group $G$ acts conformally, essentially and transitively,  is conformally flat.

\tableofcontents

\section{Introduction}
The present article deals with the so-called pseudo-Riemannian Lichnerowicz conjecture, formulated by D'Ambra and Gromov (see \cite{Gromov}) as the pseudo-Riemannian analogue of the original Lichnerowicz conjecture in the Riemannian setting. It asserts that if a compact pseudo-Riemannian conformal manifold is essential, then it is conformally flat. Unlike the Riemannian case, where the conjecture was proved independently by Obata and Ferrand \cite{Obata,Ferrand}, the pseudo-Riemannian analogue was disproved by Frances \cite{Francesun}. In particular, for signature with $p\geq 2$, Frances constructed a two-parameter family of non-conformally flat $(p,q)-$analytic pseudo-Riemannian essential structures on $\mathbb{S}^{1}\times \mathbb{S}^{p+q-1}$. Neverthless, the question still open in the Lorentzian case. In \cite{frances2}, Frances and Melnick proved the Lichnerowicz conjecture for real-analytic three-dimensional Lorentzian manifolds. In the Lorentzian case, D'Ambra and Gromov further conjectured that conformal essential manifolds are, up to finite cover, conformally equivalent to the Einstein space $\operatorname{Ein}^{1,n}$. This conjecture  was also dispoved  by Frances  in \cite{Francestrois}, where he constructed infinitely many distinct essential, conformally flat Lorentzian structures on the product of $\mathbb{S}^{1}$ by the connected sum of $m$ copies of $\mathbb{S}^{1}\times \mathbb{S}^{p+q-1}$. 

The work of Frances shows that the essentiality of the conformal group is not as restrictive as in the riemannian case, and thus it is not reasonable to expect a full classification in the pseudo-Riemannian setting. A more restrictive framework consists in considering manifolds $M$ with the conformal action of an essential  group $G$ that is algebraically interesting. This was adopted in the works of Zimmer, Bader, Zeghib, Frances and Pecastaing (see \cite{Zimmerun}, \cite{Bader}, \cite{Francesquatre}, \cite{Pecastaingun}, \cite{Pecastaingdeux}). A direct consequence of \cite{Zimmerun} is that, when the group $G$ is assumed to be simple, its  real rank is less than $p+1$ (See \cite{Bader}). In \cite{Bader} and \cite{Francesquatre} the maximal rank case is studied. They proved, in particular, that the simple essential group $G$ is locally isomorphic to $\SO(p+1,k)$, for some $p+1\leq k\leq q+1$, and that the manifold is conformally flat. Yet, the maximal real-rank hypothesis restricts considerablly the geometry. In the general case and even with the simplicity hypothesis, a wide variety of examples exists. In particular, all the examples constructed in \cite{Francestrois} admit an essential conformal action of $\operatorname{PSL}(2,\mathbb {R})$. Pecastaing \cite{Pecastaingun} proved that if the group $G$ is locally isomorphic to $\operatorname{PSL}(2,\mathbb {R})$, then the  manifold $M$ is conformally flat. In \cite{Pecastaingdeux}, Pecastaing considered the minimal-rank case and determined the smallest possible value of the index $(p,q)$ of the pseudo riemannian metric. He proved, in particular, that when the index is optimal, the manifold $M$ is conformally flat.

A natural and legitimate question is to consider the pseudo-Riemannian Lichnerowicz conjecture in a homogeneous setting; that is, to study a compact, connected pseudo-Riemannian manifold $M$ on which a Lie group $G$ acts conformally, essentially, and transitively. The present work constitutes a third step in addressing this question \cite{BDRZ1, Deffaf2025}. In the first paper \cite{BDRZ1}, the question was answered positively under the assumption that the non-compact semisimple part is the Mobius group, and a complete classification theorem was established. In the second article \cite{Deffaf2025}, the problem was considered in the complex homogeneous setting; that is, assuming that $M$ is a compact complex manifold endowed with a conformal class of holomorphic Riemannian metrics, and $G$ is a complex semisimple Lie group. Here too, the conjecture was confirmed, and a full classification theorem was provided.

While the settings considered in the first two articles were highly restrictive, they nevertheless constitute crucial steps toward the general homogeneous situation studied in this paper, in which we assume only that the group \(G\) is semisimple. Both can be seen as technical components of the overall proof. Our results make optimal use of the classification of simple non-compact real Lie algebras. It is unclear to us how this result could be obtained without relying on it.

Recall that a conformal pseudo-Riemannian manifold is a differentiable manifold $M$ endowed with a conformal class $[g] = \left\lbrace \exp(f)g \;\middle|\; 
f : M \to \mathbb{R}, \; f \text{ is a } C^\infty \text{ function} \right\rbrace
$ of a pseudo-Riemannian metric $g$ of signature $(p,q)$. Up to a twofold covering, the Einstein universe $\operatorname{Ein}^{p,q}$ is defined as the product $\mathbb{S}^{p}\times\mathbb{S}^{q}$, endowed with the conformal class of $-g_{\mathbb{S}^{p}}\oplus g_{\mathbb{S}^{q}}$, where $g_{\mathbb{S}^{p}}$ and $g_{\mathbb{S}^{q}}$ are the standard Riemannian metrics on the spheres $\mathbb{S}^{p}$ and $ \mathbb{S}^{q}$, respectively. It is a compact conformally flat manifold, that is, locally conformally diffeomorphic to  Minkowski space $\mathbb{R}^{p,q}$. The group $\operatorname{PO}(p+1,q+1)$, which turns out to be the conformal group of $\operatorname{Ein}^{p,q}$,  acts transitively on it. Additionally, by Liouville's theorem, conformal local diffeomorphisms of $\operatorname{Ein}^{p,q}$ are precisely the restrictions of elements of $\operatorname{PO}(p+1,q+1)$. Combined with the fact that Minkowski space can be embedded conformally as a dense open subset of the Einstein universe $\operatorname{Ein}^{p,q}$, this shows that a manifold is conformally flat if and only if it admits a $\left(\operatorname{PO}(p+1,q+1), \operatorname{Ein}^{p,q}\right)-$structure. 

A subgroup $G$ of the conformal group $\operatorname{Conf}(M,g)$ is said to act essentially if there is no metric in the conformal class $[g]$ preserved by $G$. The action of $\operatorname{PO}(p+1,q+1)$ on $\operatorname{Ein}^{p,q}$ provides a fundamental example of an essential action.

In this paper we prove:
\begin{theo}
\label{t}
Let $M$ be a compact, connected pseudo-Riemannian manifold.  
Let $G$ be a semisimple group which acts conformally, essentially, and transitively on $M$.  
Then $M$ is conformally flat.
\end{theo}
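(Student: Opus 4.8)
The plan is to work directly on the homogeneous space. Write $M=G/H$ with $H=\operatorname{Stab}_G(x_0)$ closed, so that $\dim\h=\dim\g-(p+q)$, and record the \emph{isotropy representation} $\rho\colon H\to\operatorname{CO}(p,q)=\R_{>0}\times\operatorname{O}(p,q)$ on $T_{x_0}M\cong\g/\h$, which is conformal because the conformal class is $G$-invariant and $H$ fixes $x_0$. First I would translate the two hypotheses. Since conformal flatness (vanishing of the Weyl tensor for $p+q\ge4$, of the Cotton tensor for $p+q=3$) is a $G$-invariant local condition, transitivity reduces the theorem to proving that the conformal curvature vanishes at the single point $x_0$. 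And, again by transitivity, a $G$-invariant metric in $[g]$ is exactly an $H$-invariant inner product in the conformal class on $T_{x_0}M$; hence \textbf{essentiality is equivalent to} the conformal-factor character $\lambda\colon H\xrightarrow{\rho}\operatorname{CO}(p,q)\to\R_{>0}$ being nontrivial, i.e. to $\chi:=d\lambda\colon\h\to\R$ being a nonzero character.

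The conformal curvature at $x_0$ is then an $H$-invariant element $W_{x_0}$ of the $\operatorname{CO}(p,q)$-module $\mathcal{W}$ of abstract Weyl (resp. Cotton) tensors, and the homothety center $\R_{>0}\subset\operatorname{CO}(p,q)$ acts on $\mathcal{W}$ by a single nonzero weight. The mechanism I would exploit is elementary: \emph{if $\rho(\h)$ contains a nonzero infinitesimal homothety $cI$, then $cI$ annihilates every invariant vector of $\mathcal{W}$, forcing $W_{x_0}=0$.} Equivalently, and more robustly in the spirit of the Riemannian Obata--Ferrand argument, it suffices to exhibit a fixed point of an $\R$-split one-parameter subgroup of $G$ at which the linear holonomy is a pure contracting homothety. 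So the entire problem is reduced to producing such an element inside the isotropy.

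Producing it is where the structure of $\g$ and the classification enter. I would first discard the compact factors of $G$: a compact group of conformal transformations preserves a metric in $[g]$ by averaging, so $\chi$ vanishes on compact ideals and the essential behaviour is carried entirely by the noncompact part. Using the restricted-root decomposition of that part, the cocompactness of $H$ in the semisimple $G$, and the bound on the real rank of an essential simple group (Zimmer, Bader), I would constrain the pair $(\g,\h)$: the nonzero character $\chi$ forces $\h$ to be non-reductive with an abelianization detecting a split direction, while cocompactness forces $\h$ to be large. Running through the classification of simple noncompact real Lie algebras, the claim to establish is that the only configurations admitting a cocompact isotropy carrying a nonzero conformal character are those in which $\h$ is, up to compact and finite factors, a parabolic subalgebra whose Levi quotient surjects onto $\operatorname{CO}(p,q)$ --- exactly the situation of the model $\operatorname{Ein}^{p,q}=\operatorname{PO}(p+1,q+1)/P$. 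In every such case $\rho(\h)\supseteq\R\cdot I$, the homothety appears, and $W_{x_0}=0$.

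The main obstacle is precisely this classification step, and the difficulty is genuine already at the representation-theoretic level: essentiality only guarantees that $\rho(\h)$ surjects onto the conformal factor $\R_{>0}$, and the element realizing the factor may carry a large $\so(p,q)$-part, so a nonzero invariant $W_{x_0}$ cannot be excluded by soft arguments (the stabilizer of a nonzero Weyl tensor can meet the center trivially and yet project onto $\R_{>0}$ once an orthogonal part is added). Pinning down $\h$ therefore requires the fine structure of each real form, and the bookkeeping is heaviest when $G$ has several noncompact simple factors, whose isotropies must be assembled compatibly, and in the low-dimensional or degenerate-signature cases ($p+q=3,4$, or small $\operatorname{O}(p,q)$) where $\mathcal{W}$ is reducible or must be replaced by the Cotton tensor. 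I expect the bulk of the proof to be this case analysis, with the conceptual content concentrated in the homothety mechanism of the second step.
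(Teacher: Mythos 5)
Your reduction to the isotropy representation is sound, and your observation that, for a transitive action, essentiality is exactly the nontriviality of the conformal character of $H$ on $T_{x_0}M$ matches the paper's setup (this is the distortion homomorphism $\delta$ of Section~\ref{sect1}). The gap is in your central mechanism. You propose to conclude by exhibiting a pure (infinitesimal) homothety in $\rho(\h)$, or a fixed point of an $\R$-split element with purely homothetic linear holonomy, and you assert that the classification will show $\h$ is essentially a parabolic whose Levi quotient surjects onto $\operatorname{CO}(p,q)$. This is false for the one nontrivial family that the classification actually produces: $\Sp(p,q)$ acting on $\Ein^{4p-1,4q-1}$ (Section~\ref{secexample}). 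There the isotropy $\p^{\R}$ is a codimension-$3$ ideal of the parabolic $\p^{\H}=\p^{\R}\oplus\so(3)$, and it contains \emph{no} pure homothety: the grading element of the stabilizer of a null line in $\so(4p,4q)$ fails to be $\H$-linear, hence does not lie in $\sp(p,q)$. Concretely, by Proposition~\ref{p} the tangent space $\g/\h$ decomposes as $\g_{-2e_1}\oplus\g_{-e_1}\oplus\bigoplus_{i>1}\g_{-(e_1\pm e_i)}\oplus\so(3)$, and every element of $\h\cap\g_0$ acts with weight $0$ on the $\so(3)$ summand while acting with nonzero weights on $\g_{-2e_1}$; no element acts as a nonzero multiple of the identity. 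The same eigenvalue bookkeeping rules out a fixed point anywhere on $\Ein^{4p-1,4q-1}$ at which an element of $\Sp(p,q)$ has pure contracting holonomy. So your strategy collapses exactly in the case that carries the difficulty, even though that space is of course conformally flat.

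The paper gets around this by never arguing through curvature at a point in the hard case. It encodes the problem as a triple $(\g,\h,\langle\cdot,\cdot\rangle)$ with a pairing condition, uses the rank-one M\"obius subalgebra $\hat{\g}(\delta)$ and a heredity principle to constrain the restricted root data (three-dimensional root space for $\delta$, $\hat{\m}\simeq\so(3)$), runs the classification of real simple algebras to force $\g=\sp(p,q)$, and then proves (Propositions~\ref{p} and~\ref{prop280}) that $\h$ and the degenerate form are \emph{uniquely determined}, so the space must coincide with the known flat model $\Ein^{4p-1,4q-1}$. Conformal flatness in the other cases comes either from the complex classification of \cite{Deffaf2025} or from the Frances--Melnick theorem applied to a conformal field with trivial $1$-jet at a point --- a strictly stronger local input than a homothety in the isotropy. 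If you want to salvage your approach, you would need to replace the homothety mechanism by one of these two tools; as written, the argument cannot close.
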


Assume that $M$ is a compact, simply connected pseudo-Riemannian manifold endowed with a transitive and essential action of the conformal group $\operatorname{Conf}(M)$. Then, by Montgomery's theorem~\cite[Theorem~A]{montgomery1950simply}, there exists a compact subgroup $K \subset \operatorname{Conf}(M)$ acting transitively on $M$. Since $M$ is simply connected, one may assume that $K$ is contained in the semisimple part of a Levi decomposition of $\operatorname{Conf}(M)$. Thus, $M$ is homogeneous under the action of a semisimple Lie group $G$. Moreover, by~\cite[Proposition~2.3]{BDRZ1}, $G$ acts essentially on $M$. As a consequence, we obtain the following corollary:

\begin{cor}
\label{corcorcorcor}
Let $M$ be a compact, simply connected pseudo-Riemannian manifold. Assume that the conformal group $\operatorname{Conf}(M)$ acts essentially and transitively on $M$. Then $M$ is conformally equivalent to $\operatorname{Ein}^{p,q}$. 
\end{cor}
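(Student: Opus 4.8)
The plan is to feed the hypotheses into Theorem~\ref{t} and then globalize conformal flatness by a developing-map argument. The reduction is exactly the one sketched above the statement: Montgomery's theorem \cite[Theorem~A]{montgomery1950simply} produces a compact subgroup $K \subset \operatorname{Conf}(M)$ acting transitively, simple connectivity of $M$ lets us assume $K$ lies in the semisimple Levi factor $G$ of $\operatorname{Conf}(M)$, so that $G$ is a semisimple group acting transitively on $M$, and by \cite[Proposition~2.3]{BDRZ1} this action is again essential. Thus the pair $(M,G)$ satisfies the hypotheses of Theorem~\ref{t}, and we conclude that $M$ is conformally flat.

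It remains to promote conformal flatness to a global conformal identification with the model. Being conformally flat, $M$ carries a $\bigl(\operatorname{PO}(p+1,q+1),\,\operatorname{Ein}^{p,q}\bigr)$-structure, and since $M$ is simply connected the holonomy representation is trivial, so the developing map is a globally defined conformal local diffeomorphism $\operatorname{dev}\colon M \to \operatorname{Ein}^{p,q}$ between manifolds of equal dimension $p+q$. I would then establish completeness: because $M$ is compact, $\operatorname{dev}$ is proper, and a proper local homeomorphism onto a connected manifold is a covering map, which is moreover surjective since $\operatorname{Ein}^{p,q}$ is connected. Simple connectivity of $M$ identifies $\operatorname{dev}$ with the universal covering, so $M$ is conformally diffeomorphic to the (simply connected model of the) Einstein universe $\operatorname{Ein}^{p,q}$. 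This last step is the pseudo-Riemannian counterpart of Kuiper's completeness argument in the Riemannian case.

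The step I expect to be least automatic is not the local-to-global passage, which is forced by the compactness of $M$, but the normalization around $\pi_{1}(\operatorname{Ein}^{p,q})$: one must match the universal cover of the quotient $(\mathbb{S}^{p}\times\mathbb{S}^{q})/\{\pm 1\}$ with the model intended in the statement, a standard but signature-sensitive verification carried out case by case from the explicit description of $\operatorname{Ein}^{p,q}$ recalled in the introduction. I emphasize that essentiality is indispensable precisely at the first stage: without it $\operatorname{Conf}(M)$ could preserve a metric in the class, and a compact simply connected homogeneous pseudo-Riemannian manifold need not then be conformally flat, so Theorem~\ref{t} is exactly what unlocks the developing-map step.
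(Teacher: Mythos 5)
Your proposal is correct and follows essentially the same route as the paper: the paper's own argument is precisely the Montgomery-theorem reduction to Theorem~\ref{t} sketched in the paragraph preceding the statement, with the final identification of a compact simply connected conformally flat manifold with the Einstein universe left implicit, and your developing-map/Kuiper-type completion is the standard way to supply that last step. The only remark worth adding is that when one of $p,q$ equals $1$ the model $\mathbb{S}^{p}\times\mathbb{S}^{q}$ has infinite fundamental group, so the covering argument shows no compact simply connected example exists in that signature and the statement is vacuous there --- consistent with your ``signature-sensitive'' caveat.
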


Finally, let us observe that Theorem~\ref{t} applies to compact semisimple Lie groups endowed with a left-invariant pseudo-Riemannian conformal structure. Such groups are not necessarily  simply connected, but they do have a finite fundamental group, and so their universal cover is compact. Further details may be found in \cite{zeghib2025semiriemannianmetricscompactsimple}.

\subsection{Organization of the  article}
In Section~\ref{sect1}, we introduce some general facts and provide an algebraic formulation of our initial problem in terms of Lie algebra terminology. Section~\ref{secexample} is devoted to the study of an important example of the action of $\Sp(p,q)$ on $\Ein^{4p-1, 4q-1}$. In Section~\ref{sect?}, we introduce the Heredity Principle and use it to obtain information about the root decomposition of our Lie algebra. Finally, in Sections~\ref{sect3} and \ref{sect4}, we prove our main theorem.

\section{Preliminaries}
\label{sect1}
Let $(M,g)$ be a compact, connected pseudo Riemannian manifold of dimension $n$, endowed with a transitive and essential conformal action of a semisimple Lie group $G$.  The Lie algebra of $G$ will be denoted by $\mathfrak{g}$. By \cite[Lemma~2.1]{BDRZ1}, we may assume without loss of generality that $G$ is connected. 

By \cite[Proposition~2.3]{BDRZ1}, the semisimple algebra $\mathfrak{g}$ is non-compact. Let then $\mathfrak{a}$ be a Cartan subalgebra of $\mathfrak{g}$ with respect to some Cartan involution $\Theta$. Consider $\mathfrak{g}=\mathfrak{g}_{0}  \bigoplus_{\alpha \in \Delta} \mathfrak{g}_{\alpha}=   \mathfrak{a}\oplus \mathfrak{m} \bigoplus_{\alpha \in \Delta} \mathfrak{g}_{\alpha}$ the associated root space decomposition of $\mathfrak{g}$, where $\Delta$ is the set of roots of $(\mathfrak{g},\mathfrak{a})$. Let $\Delta^{+}$, $\Delta^{-}$ be respectively the set of  positive and negative roots of $\mathfrak{g}$ for some choosen notion of positivity on $\mathfrak{a}^{*}$, and let $\Pi\subset \Delta^{+}$, be the basis of the root system $\Delta$. Then $\mathfrak{g}=  \mathfrak{g}_{-}\oplus\mathfrak{a}\oplus \mathfrak{m} \oplus \mathfrak{g}_{+}$, where $\mathfrak{g}_{+}=\bigoplus_{\alpha\in \Delta^{+}} \mathfrak{g}_{\alpha}$ and $\mathfrak{g}_{-}=\bigoplus_{\alpha\in \Delta^{-}} \mathfrak{g}_{\alpha}$. 
\subsection{Parabolic subalgebras}
\label{propppppppppppppppdefffff}
A subalgebra $\q$ of $\g$ containing  the minimal parabolic subalgebra $\mathfrak{a}\oplus \mathfrak{m} \oplus \mathfrak{g}_{+}$ is said to be parabolic. By \cite[Proposition~7.76]{K}), any such subalgebra is of the form  $\bigoplus_{\alpha\in \operatorname{span}(-\Pi')}\mathfrak{g}_{\alpha}\oplus\mathfrak{a}\oplus \mathfrak{m} \oplus \mathfrak{g}_{+}$, for some subset $\Pi'$ of $\Pi$. Let $\i$ be an ideal of $\q$. 
\begin{prop}
\label{proppppppppppppppp}
If $\i$ contains $\mathfrak{a}\oplus \mathfrak{m}$, then $\i=\q$.
\end{prop}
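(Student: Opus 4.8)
The plan is to exploit the fact that $\mathfrak{a}$ acts diagonalizably on $\q$, with the restricted root spaces as its eigenspaces, so that once an ideal contains $\mathfrak{a}$ it is forced to absorb every root space occurring in $\q$. The $\mathfrak{m}$ part of the hypothesis will only be needed to catch the one summand of $\q$ that is not an $\ad(\mathfrak{a})$-eigenspace.

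First I would record, using the description of parabolic subalgebras recalled just above the statement, the decomposition
\[
\q = \mathfrak{a}\oplus\mathfrak{m}\oplus\mathfrak{g}_{+}\oplus\bigoplus_{\alpha\in\operatorname{span}(-\Pi')\cap\Delta}\mathfrak{g}_{\alpha},
\]
a direct sum in which every summand other than $\mathfrak{a}\oplus\mathfrak{m}$ is a restricted root space $\mathfrak{g}_{\alpha}$ with $\alpha\in\Delta$, and hence with $\alpha\neq 0$.

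Next I would fix an arbitrary root space $\mathfrak{g}_{\alpha}\subseteq\q$. Since $\alpha$ is a nonzero linear form on $\mathfrak{a}$, I may choose $H\in\mathfrak{a}$ with $\alpha(H)\neq 0$. For every $X\in\mathfrak{g}_{\alpha}$ one has $[H,X]=\alpha(H)\,X$; because $H\in\mathfrak{a}\subseteq\i$ and $\i$ is an ideal of $\q$, the bracket $[H,X]$ lies in $\i$, and dividing by the nonzero scalar $\alpha(H)$ gives $X\in\i$. As $X$ was arbitrary, $\mathfrak{g}_{\alpha}\subseteq\i$. Applying this to each root space appearing in the decomposition of $\q$ shows that $\i$ contains $\mathfrak{g}_{+}$ together with all the negative root spaces $\mathfrak{g}_{\alpha}$ with $\alpha\in\operatorname{span}(-\Pi')$.

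Finally, combining this with the hypothesis $\mathfrak{a}\oplus\mathfrak{m}\subseteq\i$ accounts for every summand of $\q$, whence $\i=\q$. I do not expect a genuine obstacle here: the single point requiring care is the existence of $H$ with $\alpha(H)\neq 0$, which is guaranteed precisely because restricted roots are by definition nonzero functionals on $\mathfrak{a}$, so that each $\mathfrak{g}_{\alpha}$ is a nontrivial $\ad(\mathfrak{a})$-eigenspace and is therefore pulled into any ideal containing $\mathfrak{a}$.
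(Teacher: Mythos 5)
Your proof is correct and follows essentially the same route as the paper: the paper writes the key step as $\mathfrak{g}_{\alpha}=[\mathfrak{a},\mathfrak{g}_{\alpha}]\subseteq[\i,\q]\subseteq\i$ for each nonzero restricted root $\alpha$ occurring in $\q$, which is exactly your eigenvalue computation $X=\alpha(H)^{-1}[H,X]$ packaged in one line. The only cosmetic difference is that you spell out the choice of $H$ with $\alpha(H)\neq 0$ explicitly.
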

\begin{proof}
We have that $\bigoplus_{\alpha\in \operatorname{span}(-\Pi')}\mathfrak{g}_{\alpha} \oplus \mathfrak{g}_{+}=[\a,\bigoplus_{\alpha\in \operatorname{span}(-\Pi')}\mathfrak{g}_{\alpha} \oplus \mathfrak{g}_{+}]\subseteq [\i,\q]\subseteq \i $, where the last inclusion holds because $\i$ is an ideal of $\q$. Therefore, $\q=\mathfrak{a}\oplus \mathfrak{m}\oplus \bigoplus_{\alpha\in \operatorname{span}(-\Pi')}\mathfrak{g}_{\alpha} \oplus \mathfrak{g}_{+}\subseteq \i+\i=\i$, and hence $\q=\i$.
\end{proof}
\subsection{Gauss map}
Let $\operatorname{Sym}(\mathfrak{g})$ denote the space of symmetric bilinear form on $\mathfrak{g}$, endowed with the natural action of $G$ given by $g.B(X,Y)=B(Ad_{g^{-1}}X,Ad_{g^{-1}}Y)$. We define a map $\Phi$, called the Gauss map, from $M$ to $\operatorname{Sym}(\mathfrak{g})$ by
 $\Phi(x)(X,Y)=g_x( \overline{X}_x,\overline{Y}_x)$, where $\overline{X}, \overline{Y}$ are the fundamental vector fields associated to $X$ and $Y$. This map $\Phi$ is equivariant if and only if $G$ acts isometrically on $M$. When the action is conformal, the progective map $\mathbb{P}(\Phi): M\to \mathbb{P}(\operatorname{Sym}(\mathfrak{g}))$ is equivariante. Denote by $\langle,\rangle$ the image of $x$ under $\Phi$; it is a degenerate bilinear form on $\mathfrak{g}$ whose kernel is $\mathfrak{h}$.
\subsection{Distortion map}
Denote by $P$ the subgroup of $G$ preserving the conformal class of $\langle,\rangle$. On the one hand, it is an Ad-algebraic group  normalizing $H$; in particular, it contains the smallest Ad-algebraic subgroup $H^*=Ad^{-1}(\overline{Ad(H)})$ that contains $H$. By \cite[Fact~3.1]{BDRZ1}, up to a conjugacy, the Lie algebra $\mathfrak{p}$ of $P$ contains the maximal triangular subalgebra $\mathfrak{a}\oplus \mathfrak{g}_+$.

On the other hand, there is a homomorphism $\delta:P\longrightarrow \mathbb{R}$ satisfying: 
\begin{equation}
\label{Euqa1}
\langle Ad_{p}(u),Ad_{p}(v)\rangle=\exp(\delta(p))\langle u,v\rangle \quad \text{for every } p \in P \text{ and } u, v \in \mathfrak{g}.
\end{equation}
Differentiating  Equation \ref{Euqa1} gives a homomorphism, which we will continue to denote $\delta$, from $\mathfrak{p}$ to $\mathbb{R}$ such that:
\begin{equation}
\label{Equa2}
\langle ad_p u,v\rangle +\langle u,ad_p v\rangle=\delta(p)\langle u,v\rangle \quad \text{for every } p \in \mathfrak{p} \text{ and } u, v \in \mathfrak{g}.
\end{equation}
The restriction of $\delta$ to $\mathfrak{a}$, which is non-trivial by \cite[Proposition~3.2]{BDRZ1}, is the distortion map. 
\subsection{Pairing condition}
Two elements $\alpha$ and $\beta$ of $\Delta \cup {0}$ are said to be paired if $\mathfrak{g}_{\alpha}$ and $\mathfrak{g}_{\beta}$ are not $\langle .,.\rangle$-orthogonal. Since $\langle .,.\rangle$ is non-trivial, there are at least two paired elements $\alpha$ and $\beta$ in $\Delta \cup {0}$. In this case, $\alpha + \beta = \delta$. This shows, in particular, that if $\alpha$ is a root such that $\mathfrak{g}_{\alpha} \not\subseteq \mathfrak{h}$, then $\delta - \alpha$ is also a root such that $\mathfrak{g}_{\delta-\alpha} \not\subseteq \mathfrak{h}$. We refer to this property as the pairing condition.
 
 \subsection{Equivalent problem in the Lie algebra level} 
 \label{subsect14}Now we will forget our original problem and argue at a pure Lie algebra level. We say that the triple $(\mathfrak{g},\mathfrak{h},\langle,\rangle)$ verify the algebraic formulation if the following properties hold:
 \begin{itemize}
 	\item The non-compact semisimple part of $\mathfrak{g}$ is a non-trivial, $\mathfrak{h}$ a subalgebra of $\mathfrak{g}$, and $\langle,\rangle$ is a degenerate scalar product on $\mathfrak{g}$ whose kernel is $\mathfrak{h}$;
 	\item There is a root space decomposition as described above;
 	\item There is a non-trivial distortion $\delta:\mathfrak{a}\longrightarrow \mathbb{R}$;
 	\item The pairing condition is satisfied;
 	\item The isotropy subalgebra normalizes $\mathfrak{a}\oplus \mathfrak{g}_+$.
 \end{itemize}
\subsection{Modification}
We say that a subalgebra $\mathfrak{g}'$ is a modification of $\mathfrak{g}$, if $\mathfrak{g}'$ projects surjectively on $\mathfrak{g}/\mathfrak{h}$. Equivalently, $G'/(G'\cap H)$ is an open orbit of $M$, where  $G'$ is the connected subgroup of $G$ associated to $\mathfrak{g}'$.
\subsection{Hypothesis}
\label{Hyp}
Note that if $\g_{0}\subset\h$, then by \cite[Proposition~3.3]{BDRZ1}, $M=G/H$ is conformally flat. From now on we therefore assume that $\g_{0}\nsubseteq\h$. In this case, $\delta$ is a root and $\g_{\delta}$ is paired with $\g_{0}$. We have the following important Lemma:
\begin{lemm}
\label{Lemma1}
Up to modification, the non-compact semisimple part of $\g$ is simple.
\end{lemm}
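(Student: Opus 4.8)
The plan is to exploit the pairing condition to show that, outside a single simple factor, every root space already lies in the isotropy $\h$, and then to realize the desired simplification by one explicit modification.

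First I would split the non-compact semisimple part of $\g$ into its simple ideals $\s_1\oplus\cdots\oplus\s_k$ and record the resulting orthogonal decompositions $\a=\bigoplus_i\a_i$ with $\a_i=\a\cap\s_i$, and $\g_0=\bigoplus_i(\a_i\oplus\m_i)\oplus\g_c$, where $\g_c$ is the sum of the compact simple factors and $\m_i=\m\cap\s_i$. The restricted root system splits as a disjoint, mutually orthogonal union $\Delta=\bigsqcup_i\Delta_i$ with $\Delta_i\subseteq\a_i^{*}$, so that each root is supported on exactly one factor. By the Hypothesis of Subsection~\ref{Hyp}, $\delta$ is a root; let $\s_{i_0}$ be the factor with $\delta\in\Delta_{i_0}$.

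The key step is to prove that $\g_\alpha\subseteq\h$ for every root $\alpha\in\Delta_i$ with $i\neq i_0$. Suppose not; then $\g_\alpha\nsubseteq\h$, so by the pairing condition $\delta-\alpha$ is again a root. But $\alpha$ is supported on $\a_i^{*}$ and $\delta$ on $\a_{i_0}^{*}$ with $i\neq i_0$, hence $\delta-\alpha$ is non-zero with non-trivial components in the two distinct summands $\a_{i_0}^{*}$ and $\a_i^{*}$; since every root lies in a single factor, $\delta-\alpha$ cannot be a root, a contradiction. Thus every essential root space lies in $\s_{i_0}$, and all root spaces of the remaining factors are contained in $\h$.

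Finally I would take the modification $\g':=\g_0+\s_{i_0}$. This is a subalgebra, since $\g_0$ normalizes the ideal $\s_{i_0}$; writing $\c:=\bigoplus_{i\neq i_0}(\a_i\oplus\m_i)\oplus\g_c$ one has $\g'=\s_{i_0}\oplus\c$ with $\c$ reductive, commuting with $\s_{i_0}$, and with compact semisimple part, so the non-compact semisimple part of $\g'$ is exactly $\s_{i_0}$, which is simple. It then remains to check that $\g'$ surjects onto $\g/\h$: since $\g_0\subseteq\g'$, every root space $\g_\alpha$ with $\alpha\in\Delta_{i_0}$ lies in $\s_{i_0}\subseteq\g'$, while every other root space lies in $\h$ by the previous step, whence $\g=\g'+\h$. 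Hence $\g'$ is a modification whose non-compact semisimple part is simple. The main obstacle is the second step: everything hinges on the orthogonal decomposition of the restricted root system together with the fact that the survival of $\delta-\alpha$ as a root forces $\alpha$ into the same simple factor as $\delta$; once this concentration of the essential root spaces is established, the construction of the modification and the verification that it covers $\g/\h$ are purely formal.
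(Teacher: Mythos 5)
Your proof is correct and follows essentially the same route as the paper: both arguments use the disjointness of the restricted root systems of distinct simple ideals together with the pairing condition (the survival of $\delta-\alpha$ as an element of $\Delta\cup\{0\}$) to force every root space outside the factor containing $\delta$ into $\h$. You are merely more explicit than the paper in constructing the modification $\g'=\g_0+\s_{i_0}$ and verifying that it surjects onto $\g/\h$, which is a welcome but inessential elaboration.
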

\begin{proof}
Assume that the non-compact semisimple part of $\mathfrak{g}$ decomposes as a direct sum
$
\mathfrak{g}_{1} \oplus \mathfrak{g}_{2},
$
where $\mathfrak{g}_{1}$ is simple with $\mathfrak{g}_{1}\nsubseteq \mathfrak{h}$, and $\mathfrak{g}_{2}$ is semisimple with $\mathfrak{g}_{2}\nsubseteq \mathfrak{h}$. Since the root system of $\mathfrak{g}$ is the disjoint union of the root systems of $\mathfrak{g}_{1}$ and $\mathfrak{g}_{2}$, we may assume, without loss of generality, that $\delta$ is a root of $\mathfrak{g}_{1}$.

Suppose now that there exists a root $\alpha$ of $\mathfrak{g}_{2}$ such that $\mathfrak{g}_{\alpha}\nsubseteq \mathfrak{h}$. Then $\delta - \alpha$ would be a root of $\mathfrak{g}$. However, since $\delta$ belongs to the root system of $\mathfrak{g}_{1}$ and $\alpha$ belongs to that of $\mathfrak{g}_{2}$, and these two systems are disjoint, this would force either $\delta$ or $\alpha$ to lie in both root systems, a contradiction.

Thus, for every root $\alpha$ of $\mathfrak{g}_{2}$, we must have $\mathfrak{g}_{\alpha} \subseteq \mathfrak{h}$, and therefore $\mathfrak{g}_{2}\subseteq \mathfrak{h}$. We conclude that, after  modification, the non-compact semisimple part of $\mathfrak{g}$ must in fact be simple.

\end{proof}

\section{Action of $\Sp(p,q)$ on $\Ein^{4p-1, 4q-1}$}
\label{secexample}
Let \(\mathbb{H}^{p,q}\) denote the \(p+q\)-dimensional quaternionic space \(\mathbb{H}^{p+q}\) with \(p \leq q\), endowed with the standard Hermitian quadratic form $$
 \q(u_1, \ldots, u_p, v_1, \ldots, v_q) = \sum_{i=1}^p |u_i|^2 - \sum_{j=1}^q |v_j|^2.
$$

Consider \(\Sp(p,q)\), the subgroup of \(\GL(p+q,\mathbb{H})\) preserving the quaternionic Hermitian form on \(\mathbb{H}^{p,q}\). Via the natural identification \(\mathbb{H}^{p,q} \simeq \mathbb{R}^{4p, 4q}\), this group can be viewed as a subgroup of \(\SO(4p, 4q)\) whose elements commute with the right action of \(\mathbb{H}\) on \(\mathbb{H}^{p,q}\), defined by
\[
h \cdot (u_1, \ldots, u_p, v_1, \ldots, v_q) = (u_1 h, \ldots, u_p h, v_1 h, \ldots, v_q h).
\]

Denote by \(\mathcal{C}_0 = \{ z \in \mathbb{H}^{p,q} \mid q(z) = 0 \}\) the light cone associated to \(\q\) and let \(\mathcal{Q}_{p+q}(\mathbb{H})\) be its quaternionic projectivization \(\mathbb{H}^* \diagdown \mathcal{C}_0 \subset \mathbb{H}\mathbb{P}^{p+q-1}\), where \(\mathbb{H}\mathbb{P}^{p+q-1}\) is the right projective quaternionic space.  
Similarly, we consider \(\mathcal{Q}_{p+q}(\mathbb{R})\) to be the real projectivization \(\mathcal{C}_0 / \mathbb{R}^* \subset \mathbb{R}\mathbb{P}^{4(p+q)-2}\) of \(\mathcal{C}_0\). Under the identification \(\mathbb{H}^{p,q} \simeq \mathbb{R}^{4p, 4q}\), the standard Hermitian quadratic form \(\q\) defines a conformal structure  on \(\mathcal{Q}_{p+q}(\mathbb{R})\), which is then naturally identified with the Einstein universe \(\Ein^{4p-1, 4q-1}\). In this case, the natural projection $\pi:\Ein^{4p-1, 4q-1}\simeq \mathcal{Q}_{p+q}(\mathbb{R})\longrightarrow \mathcal{Q}_{p+q}(\mathbb{H})$ is a fibration  whose fibers are topologically identified with $\mathbb{R}P^{3}$. It is equivariant under the action of \(\Sp(p,q)\).

The group \(\Sp(p,q)\) acts transitively on both \(\mathcal{Q}_{p+q}(\mathbb{H})\) and \(\Ein^{4p-1, 4q-1}\). Let \(z \in \mathcal{C}_0\), and denote by \(\operatorname{P}^{\mathbb{H}}\) and \(\operatorname{P}^{\mathbb{R}}\) the stabilizers of \(z\mathbb{H} \in \mathcal{Q}_{p+q}(\mathbb{H})\) and \(\mathbb{R}z \in \Ein^{4p-1, 4q-1}\), respectively.  Let us also denote by $\operatorname{P}$ the stabilizer of \(\mathbb{R}z \in \Ein^{4p-1, 4q-1}\) in \(\SO(4p, 4q)\). Finally, let $\mathfrak{p}^{\mathbb{R}}$, $\mathfrak{p}^{\mathbb{H}}$ and $\mathfrak{p}$ denote the Lie subalgebras associated with \(\operatorname{P}^{\mathbb{R}}\), \(\operatorname{P}^{\mathbb{H}}\), and $\operatorname{P}$, respectively.

On the one hand, \(\operatorname{P}^{\mathbb{H}}\) acts tansitively on the fiber $\pi^{-1}(z\mathbb{H})$. On the other hand, since the elements of \(\operatorname{P}^{\mathbb{H}}\) commute with the right action of  \(\mathbb{H}\) on \(\mathbb{H}^{p,q}\), we obtain that \(\operatorname{P}^{\mathbb{R}}\) acts trivially on the fiber $\pi^{-1}(z\mathbb{H})$, and hence \(\operatorname{P}^{\mathbb{R}}\) is a normal subgroup of \(\operatorname{P}^{\mathbb{H}}\). Consequently, $\mathfrak{p}^{\mathbb{R}}$ is an ideal of $\mathfrak{p}^{\mathbb{H}}$, and $\mathfrak{p}^{\mathbb{H}}$ decomposes as  a direct sum of vector spaces $\mathfrak{p}^{\mathbb{H}}=\mathfrak{p}^{\mathbb{R}}\oplus \so(3)$.

Let us finally note that \(\mathfrak{p}\) is a parabolic subalgebra (in the sense of subsection~\ref{propppppppppppppppdefffff}). 
Now consider a root space decomposition
$
\mathfrak{g}_{-} \oplus \mathfrak{a} \oplus \mathfrak{m} \oplus \mathfrak{g}_{+}
$
of \(\mathfrak{sp}(p,q)\). 
The nilpotent subalgebra \(\mathfrak{a}\oplus\mathfrak{g}_{+}\) lies, inside \(\mathfrak{so}(4p,4q)\), in a Borel subalgebra and hence in a minimal parabolic one. 
Since the latter is unique up to conjugacy, we may assume that 
$
\mathfrak{a}\oplus\mathfrak{g}_{+}\subset \mathfrak{p}.
$
But \(\mathfrak{p}^{\mathbb{R}} \subset \mathfrak{p}\), and therefore 
$
\mathfrak{a}\oplus\mathfrak{g}_{+}\subset \mathfrak{p}^{\mathbb{R}}.
$

\section{Heredity principale} 
\label{sect?}
Let $\Gamma$ be a closed symmetric subset of $\Delta$, , that is, 
if $\alpha, \beta \in \Gamma$ and $\alpha + \beta \in \Delta$, then $\alpha + \beta \in \Gamma$, 
and if $\alpha \in \Gamma$, then $-\alpha \in \Gamma$. Consider the $\Theta$ invariant Lie subalgebra of $\mathfrak{g}$ defined by $\g_{\Gamma}=\g_{0} \bigoplus_{\alpha \in \Gamma}\g_{\alpha}$. By \cite[Corollary~6.29]{K}, this is a reductive Lie subalgebra of $\mathfrak{g}$. Moreover, both the semisimple part $[\g_{\Gamma}, \g_{\Gamma}]$ and the center $Z(\g_{\Gamma})$ of $\g_{\Gamma}$ are preserved by $\Theta$. 
 
Let $\mathfrak{g} = \mathfrak{p}_0 \oplus \mathfrak{l}_0$ be the Cartan decomposition of $\mathfrak{g}$ associated with $\Theta$. The subspaces $\mathfrak{p}_0$ and $\mathfrak{l}_0$ are orthogonal with respect to the Killing form $B$ of $\mathfrak{g}$, which is positive definite on $\mathfrak{p}_0$ and negative definite on $\mathfrak{l}_0$. Since $\mathfrak{g}_0 = \mathfrak{a} \oplus \mathfrak{m} \neq 0$, the Killing form restricts to a nondegenerate scalar product on $\mathfrak{g}_\Gamma$, and it follows that the involution $\Theta$ induces a Cartan involution on the subalgebra $[\mathfrak{g}_\Gamma, \mathfrak{g}_\Gamma]$. The corresponding Cartan decomposition is then given by
$
[\mathfrak{g}_{\Gamma}, \mathfrak{g}_{\Gamma}] = (\mathfrak{p}_0 \cap [\mathfrak{g}_{\Gamma}, \mathfrak{g}_{\Gamma}]) \oplus (\mathfrak{l}_0 \cap [\mathfrak{g}_{\Gamma}, \mathfrak{g}_{\Gamma}]).
$ In fact we have the following proposition: 
\begin{prop}
\label{pro0001}
$\a\cap [\g_{\Gamma}, \g_{\Gamma}]$ is a Cartan subalgebra of $ [\g_{\Gamma}, \g_{\Gamma}]$ associated with $\Theta$.
\end{prop}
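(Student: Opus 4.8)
The plan is to verify the two defining features of a restricted Cartan subalgebra in the sense used throughout the paper: that $\a\cap[\g_{\Gamma},\g_{\Gamma}]$ is a \emph{maximal abelian} subspace of the $\p$-part $\p_0\cap[\g_{\Gamma},\g_{\Gamma}]$ of the Cartan decomposition, and that it consists of semisimple (i.e.\ $\ad$-diagonalizable over $\R$) elements. The second point is immediate: since $\a\cap[\g_{\Gamma},\g_{\Gamma}]\subseteq\a\subseteq\p_0$, each of its elements acts on $\g$ by an $\ad$ that is symmetric with respect to the inner product $-B(\cdot,\Theta\cdot)$, hence is diagonalizable over $\R$; a fortiori this remains true on the $\Theta$-invariant subalgebra $[\g_{\Gamma},\g_{\Gamma}]$. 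So the real content is maximality. The single structural fact I want to record first is $\g_0\cap\p_0=\a$: indeed $\g_0=\a\oplus\m$ with $\a\subseteq\p_0$ and $\m=Z_{\l_0}(\a)\subseteq\l_0$, so intersecting with $\p_0$ kills the $\m$-part.

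Next I would split $\a$ along the reductive decomposition $\g_{\Gamma}=Z(\g_{\Gamma})\oplus[\g_{\Gamma},\g_{\Gamma}]$, both summands being $\Theta$-invariant. Given $X\in\a\subseteq\p_0\subseteq\g_{\Gamma}$, write $X=X_z+X_s$ with $X_z\in Z(\g_{\Gamma})$ and $X_s\in[\g_{\Gamma},\g_{\Gamma}]$. Applying $\Theta$ and using $\Theta X=-X$ together with the $\Theta$-invariance of the two summands forces $X_z,X_s\in\p_0$. But $X_z$ centralizes $\g_{\Gamma}\supseteq\a$, so $X_z\in Z_{\g}(\a)=\g_0$, and combined with $X_z\in\p_0$ the identity above gives $X_z\in\g_0\cap\p_0=\a$; hence also $X_s=X-X_z\in\a$. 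This yields the decomposition
\[
\a=\bigl(\a\cap Z(\g_{\Gamma})\bigr)\oplus\bigl(\a\cap[\g_{\Gamma},\g_{\Gamma}]\bigr).
\]

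With this in hand, maximality follows by a self-centralizing argument. Let $X\in\p_0\cap[\g_{\Gamma},\g_{\Gamma}]$ commute with every element of $\a\cap[\g_{\Gamma},\g_{\Gamma}]$. Since $\a\cap Z(\g_{\Gamma})\subseteq Z(\g_{\Gamma})$ commutes with all of $\g_{\Gamma}$, and $X\in\g_{\Gamma}$, the element $X$ automatically commutes with $\a\cap Z(\g_{\Gamma})$ as well; by the displayed decomposition $X$ therefore centralizes all of $\a$, i.e.\ $X\in Z_{\g}(\a)=\g_0$. Intersecting with $\p_0$ gives $X\in\g_0\cap\p_0=\a$, and since $X\in[\g_{\Gamma},\g_{\Gamma}]$ we conclude $X\in\a\cap[\g_{\Gamma},\g_{\Gamma}]$. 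Thus the centralizer of $\a\cap[\g_{\Gamma},\g_{\Gamma}]$ inside $\p_0\cap[\g_{\Gamma},\g_{\Gamma}]$ equals $\a\cap[\g_{\Gamma},\g_{\Gamma}]$ itself, which (for abelian subspaces of a $\p$-part) is exactly the statement that it is maximal abelian, completing the proof.

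The only delicate point, and the one I would take care to justify cleanly, is the decomposition of $\a$ across the center and the semisimple part — specifically the claim $Z(\g_{\Gamma})\cap\p_0\subseteq\a$. Everything else (semisimplicity of elements of $\p_0$, the equivalence between maximal abelian and self-centralizing in $\p_0\cap[\g_{\Gamma},\g_{\Gamma}]$, and the $\Theta$-invariance of $Z(\g_{\Gamma})$ and $[\g_{\Gamma},\g_{\Gamma}]$) is either standard or already granted in the paragraph preceding the statement; the whole argument hinges on the elementary identity $\g_0\cap\p_0=\a$ being used twice.
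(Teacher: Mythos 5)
Your proof is correct, but it takes a genuinely different route from the paper's. The paper argues by contradiction at the level of root spaces: a hypothetical $a'\in\p_0\cap[\g_{\Gamma},\g_{\Gamma}]\setminus\a$ commuting with $\a\cap[\g_{\Gamma},\g_{\Gamma}]$ must have a nonzero component in some $\g_{\alpha}$, $\alpha\in\Gamma$, and the coroot $H_{\alpha}$, which lies in $[\g_{\alpha},\g_{-\alpha}]\subseteq\a\cap[\g_{\Gamma},\g_{\Gamma}]$ by \cite[Proposition~6.52]{K}, then fails to commute with $a'$. You instead first prove the splitting $\a=\bigl(\a\cap Z(\g_{\Gamma})\bigr)\oplus\bigl(\a\cap[\g_{\Gamma},\g_{\Gamma}]\bigr)$ from the $\Theta$-invariance of the two reductive factors together with the identity $\g_0\cap\p_0=\a$, and then run a self-centralizing argument reducing everything to $Z_{\g}(\a)=\g_0$. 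Both arguments are sound and about equally short. The paper's version has the side benefit of exhibiting the coroots $H_{\alpha}$ as explicit elements of $\a\cap[\g_{\Gamma},\g_{\Gamma}]$, a fact it reuses later (e.g.\ in Proposition~\ref{prrrrrroooooopppp1}); your version avoids any appeal to the location of coroots and, as a bonus, delivers the $\a$-component of the decomposition in Equation~\ref{equaajout} and of Proposition~\ref{pripou} essentially for free, so it integrates the proposition more tightly with the material that immediately follows it. The one step you flag as delicate --- $Z(\g_{\Gamma})\cap\p_0\subseteq\g_0\cap\p_0=\a$ --- is indeed the load-bearing point, and your justification of it (central elements of $\g_{\Gamma}$ centralize $\a\subseteq\g_{\Gamma}$, hence lie in $\g_0$) is complete.
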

\begin{proof}
All we need to prove is that $\mathfrak{a}\cap[\mathfrak{g}_{\Gamma},\mathfrak{g}_{\Gamma}]$ is a maximal abelian subspace of $\mathfrak{p}_0\cap[\mathfrak{g}_{\Gamma},\mathfrak{g}_{\Gamma}]$. Assume this is not the case; then there exists an element $a'\in\mathfrak{p}_0\cap[\mathfrak{g}_{\Gamma},\mathfrak{g}_{\Gamma}]\setminus\mathfrak{a}$ that commutes with $\mathfrak{a}\cap[\mathfrak{g}_{\Gamma},\mathfrak{g}_{\Gamma}]$. Note that $a'$ cannot lie in $\mathfrak{g}_0$, since otherwise it would commute with all of $\mathfrak{a}$, contradicting the maximality of $\mathfrak{a}$ in $\mathfrak{p}_0$. Therefore, there is $\alpha \in \Gamma$ such that the projection of $a'$ on $\g_{\alpha}$ is non zero.

Now, let $H_{\alpha}$ denote the unique element of $\mathfrak{a}$ such that
$
B_{\Theta}(H_{\alpha},\cdot)=\alpha,
$
where $B_{\Theta}(X,Y)=-B(X,\Theta(Y))$. By \cite[Proposition~6.52]{K}, we have $H_{\alpha}\in[\mathfrak{g}_{\alpha},\mathfrak{g}_{-\alpha}]$ and hence $H_{\alpha}\in\mathfrak{a}\cap[\mathfrak{g}_{\Gamma},\mathfrak{g}_{\Gamma}]$. But $H_{\alpha}$ does not commute with $a'$, since $[H_{\alpha},a']$ has a nontrivial projection onto $\mathfrak{g}_{\alpha}$. This is a contradiction.
\end{proof}

Note that, for every $\alpha \in \Gamma$, we have $\mathfrak{g}_{\alpha}=[\mathfrak{a},\mathfrak{g}_{\alpha}]\subseteq [\mathfrak{g}_{\Gamma},\mathfrak{g}_{\Gamma}]$. Moreover,  $\mathfrak{g}_{0}$ decomposes as the direct sum
\begin{equation}
\label{equaajout}
\mathfrak{g}_0 = (\mathfrak{g}_0 \cap [\mathfrak{g}_\Gamma, \mathfrak{g}_\Gamma]) \;\oplus\; (\mathfrak{g}_0 \cap Z(\mathfrak{g}_\Gamma))
\end{equation}

Together with Proposition~\ref{pro0001}, this shows that the root space decomposition of $[\mathfrak{g}_{\Gamma},\mathfrak{g}_{\Gamma}]$ is
\begin{equation}
\label{equaajoutt}
[\mathfrak{g}_{\Gamma},\mathfrak{g}_{\Gamma}]
= (\mathfrak{g}_{0}\cap[\mathfrak{g}_{\Gamma},\mathfrak{g}_{\Gamma}]) \;\bigoplus_{\alpha \in \Gamma} \mathfrak{g}_{\alpha}
= (\mathfrak{a}\cap[\mathfrak{g}_{\Gamma},\mathfrak{g}_{\Gamma}]) \oplus (\mathfrak{m}\cap[\mathfrak{g}_{\Gamma},\mathfrak{g}_{\Gamma}]) \;\bigoplus_{\alpha \in \Gamma} \mathfrak{g}_{\alpha}.
\end{equation}

If $\delta \in \Gamma$, the pairing condition implies that $\g_{\Gamma}$ is orthogonal to $\g_{\beta}$ for every $\beta \in \Delta \setminus \Gamma$. This shows that $\h \cap \g_{\Gamma}$ is the kernel of $\langle ., . \rangle_{\mathfrak{g}_{\Gamma}}$, the restriction of $\langle ., . \rangle$ to $\g_{\Gamma}$. In fact, the triple $(\g_{\Gamma},\h \cap \g_{\Gamma}, \langle,\rangle _{\mathfrak{g}_{\Gamma}})$ satisfies the algebraic formulation given subsection~\ref{subsect14}; in this case we say that $\g_{\Gamma}$ satisfies the heridity principale.
\subsection{The M\"obius subalgebra $\hat{\g}(\delta)$} \label{Mobius subalgebra}
Let $\Gamma_{\delta} =\{\, n\delta \in \Delta \mid n \in \mathbb{Z} \,\}$. The smallest subalgebra satisfying the heredity principle described above is $\g_{\Gamma_{\delta}}$. Denote by $\hat{\g}(\delta)$ its non compact semisimple part. It is a rank-one simple Lie algebra  with the root decomposition $\hat{\g}(\delta) = \hat{\a} \oplus \hat{m} \oplus \g_{2\delta} \oplus \g_\delta \oplus \g_{-\delta} \oplus \g_{-2\delta}$, where $\hat{\a} = \mathfrak{a} \cap \hat{\g}(\delta)=\mathfrak{a}\cap[\g_{\Gamma_{\delta}}, \g_{\Gamma_{\delta}}]$ and $\hat{m} = \mathfrak{m} \cap\hat{\g}(\delta)$. 
\begin{prop}
\label{prop5}
The Lie subalgebra $\hat{\g}(\delta)$ is isomorphic to the M\"obius Lie algebra $\so(1,n+1)$.
\end{prop}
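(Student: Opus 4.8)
The plan is to reduce the statement to a purely combinatorial fact about the restricted root system of $\hat{\g}(\delta)$, namely that it is reduced of type $A_{1}$, i.e. that $\g_{2\delta}=\g_{-2\delta}=0$. Granting this, $\hat{\g}(\delta)$ is a rank-one simple real Lie algebra whose restricted root system is $A_{1}$; by the classification of rank-one simple real Lie algebras (the isometry algebras of the real, complex, quaternionic and octonionic hyperbolic spaces), the only ones with reduced restricted root system are the $\so(1,m)$, while $\su(1,m)$, $\sp(1,m)$ and $\mathfrak{f}_{4(-20)}$ all carry a nonreduced $(BC)_{1}$ system. Writing $m=n+1$ then yields $\hat{\g}(\delta)\cong\so(1,n+1)$. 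So everything comes down to excluding the possibility that $2\delta$ is a root.

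The key step I would isolate first is that $\h$ cannot contain any negative restricted root vector. Indeed, by the last item of the algebraic formulation the isotropy subalgebra $\h$ normalizes the maximal triangular subalgebra $\a\oplus\g_{+}$. Suppose $0\neq X\in\h\cap\g_{-\beta}$ for some $\beta\in\Delta^{+}$. Choosing $H\in\a$ with $\beta(H)\neq 0$ and noting $H\in\a\subseteq\a\oplus\g_{+}$, the normalization gives $[X,H]\in\a\oplus\g_{+}$; but $[X,H]=\beta(H)\,X$ is a nonzero element of $\g_{-\beta}$, and $\g_{-\beta}\cap(\a\oplus\g_{+})=0$. This contradiction shows $\h\cap\g_{-\beta}=0$ for every $\beta\in\Delta^{+}$.

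Next I would bring in the pairing condition to finish. Assume for contradiction that $2\delta\in\Delta$, so $\g_{-2\delta}\neq 0$. The only possible pairing partner of $\g_{-2\delta}$ is $\g_{\delta-(-2\delta)}=\g_{3\delta}$; but $3\delta$ is never a root, since two proportional restricted roots have ratio in $\{\tfrac12,1,2\}$. Hence $\g_{-2\delta}$ is $\langle.,.\rangle$-orthogonal to every root space and to $\g_{0}$, i.e. $\g_{-2\delta}\subseteq\h$. Applying the previous paragraph with $\beta=2\delta$ gives $\g_{-2\delta}\subseteq\h\cap\g_{-2\delta}=0$, contradicting $\g_{-2\delta}\neq 0$. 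Therefore $2\delta\notin\Delta$, so $\Gamma_{\delta}=\{\pm\delta\}$ and $\hat{\g}(\delta)=\hat{\a}\oplus\hat{m}\oplus\g_{\delta}\oplus\g_{-\delta}$ has reduced restricted root system, which by the reduction above forces $\hat{\g}(\delta)\cong\so(1,n+1)$.

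The only external input is the classification of rank-one simple real Lie algebras used to identify the reduced case with $\so(1,m)$; the remainder is the short bracket computation above. The point that most needs care is the second paragraph: one must be sure that the normalization hypothesis on $\h$ genuinely rules out negative root vectors inside $\h$, since this is exactly the place where the \emph{essential conformal} nature of the action enters, through the triangular subalgebra $\a\oplus\g_{+}\subseteq\mathfrak{p}$ and the nontriviality of the distortion $\delta$.
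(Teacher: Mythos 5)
There is a genuine gap, and it sits in your second paragraph, which is the load-bearing step of the whole argument. You read the last item of the algebraic formulation literally as $[\h,\a\oplus\g_{+}]\subseteq\a\oplus\g_{+}$ and deduce that $\h\cap\g_{-\beta}=0$ for every positive root $\beta$. That conclusion is false in the setting of this paper: for instance, in the proof of Proposition~\ref{p} the pairing condition forces $\g_{-(e_{i}\pm e_{j})}\subseteq\h$ for all $1<i<j$, and these are negative root spaces. The normalization that is actually available, and the only one the paper ever uses, goes the other way: the group $P$ preserving the conformal class of $\langle\cdot,\cdot\rangle$ normalizes $H$, and after conjugation $\p\supseteq\a\oplus\g_{+}$, so $[\a\oplus\g_{+},\h]\subseteq\h$. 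From this one only gets $[H,X]=-\beta(H)X\in\h$ for $X\in\h\cap\g_{-\beta}$, which is no contradiction at all. So your key lemma is unsound and cannot be salvaged as stated; the phrasing in Subsection~\ref{subsect14} should be read as ``the isotropy subalgebra is normalized by $\a\oplus\g_{+}$.''

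Even granting that lemma, the last step fails on a sign. You apply it with $\beta=2\delta$, which presupposes that $\delta$ is a positive root; in fact $\delta$ is a \emph{negative} root. One first shows $\hat{\a}\subseteq\h$ by applying Equation~\ref{Equa2} to the nilpotent elements $x$ and $\Theta(x)$ for $0\neq x\in\g_{\delta}$ (one of them lies in $\p$ and preserves $\langle\cdot,\cdot\rangle$, giving $\langle[x,\Theta(x)],x\rangle=0$); then $\g_{\delta}\subseteq\g_{+}\subseteq\p$ would give $\g_{\delta}=[\hat{\a},\g_{\delta}]\subseteq[\h,\p]\subseteq\h$, contradicting the pairing of $\g_{\delta}$ with $\g_{0}$, so $\delta<0$. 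With $\delta<0$ your $\g_{-2\delta}$ is a positive root space and even your lemma would say nothing about it. The correct way to exclude $2\delta$ is then: $\g_{-\delta}\subseteq\g_{+}\subseteq\p$ forces $\g_{-\delta}=[\hat{\a},\g_{-\delta}]\subseteq\h$; the space $\g_{2\delta}$ can only pair with $\g_{-\delta}\subseteq\h$, hence $\g_{2\delta}\subseteq\h$; and then $\g_{\delta}=[\g_{2\delta},\g_{-\delta}]\subseteq\h$, a contradiction. Your first paragraph (reduced rank-one implies $\so(1,n+1)$ via the classification) is fine and agrees with what the paper implicitly uses.
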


\begin{proof}
All we need to prove is that $2\delta$ is not a root. For that, we proceed as follows:

First, we have $\hat{\a} \subset \h$. Indeed, on the one hand, $\hat{\a} \subset \a \subset \g_{0}$ is orthogonal to every $\g_{\alpha}$ with $\alpha \neq \delta$. On the other hand, let $0 \neq x \in \g_{\delta}$. Since $\hat{\a}$ is one-dimensional, the element $[x, \Theta(x)] \neq 0$ generates $\hat{\a}$. Now, either $x$ or $\Theta(x)$ lies in $\p$, and since both are nilpotent elements, one of them preserves $\langle \cdot, \cdot \rangle$. Applying Equation \ref{Equa2} to $x$ and $\Theta(x)$ yields $\langle [x, \Theta(x)], x \rangle = 0$. Therefore, $\hat{\a}$ is also orthogonal to $\g_{\delta}$.

Second, we have  $\g_{\delta}\nsubseteq \p$ and hence $\delta$ is a negative root. Indeed, otherwise we would have $\g_{\delta}=[\hat{a},\g_{\delta}]\subset [\h,\p]\subset \h$, which is clearly not the case. 

Third, $\g_{-\delta}\subset \h$. Indeed, since $\delta$ is a negative root, it follows that $\g_{-\delta}\subset \p$. Consequently, $\g_{-\delta}=[\hat{a},\g_{-\delta}]\subset [\h,\p]\subset \h$.

Fourth, assume that $2\delta$ is a root. Then $\g_{2\delta}$ is paired with $\g_{-\delta}\subset \h$ and  is therefore contained in $\h$. Now, by \cite{Kraljevic}, we have that $\g_{\delta}=[\g_{2\delta},\g_{-\delta}]\subset [\h,\h]\subset \h$, which contradicts the second point. This shows that $2\delta$ is not a root. 
\end{proof}

Consequently, we obtain the following algebras direct sum decompositions:
\begin{prop}
\label{pripou}
We have: \[
\mathfrak{a} = \hat{\mathfrak{a}} \oplus \left( \mathfrak{a} \cap Z(\mathfrak{g}_{\Gamma_{\delta}}) \right),
\]
\[
\mathfrak{m} = \hat{\mathfrak{m}} \oplus \left( \mathfrak{g}_{c} \oplus \mathfrak{m} \cap Z(\mathfrak{g}_{\Gamma_{\delta}}) \right),
\]
where \(\mathfrak{g}_{c}\) is the compact semisimple factor of $\mathfrak{g}$.
\end{prop}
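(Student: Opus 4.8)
The plan is to read both identities off the canonical structure of the reductive algebra $\g_{\Gamma_\delta}$ furnished by the heredity principle, after splitting it by the Cartan involution $\Theta$. Concretely, I would start from the decomposition of a reductive algebra into its semisimple part and its (abelian) center, $\g_{\Gamma_\delta}=[\g_{\Gamma_\delta},\g_{\Gamma_\delta}]\oplus Z(\g_{\Gamma_\delta})$, and refine the first factor using Proposition~\ref{prop5}: since the non-compact semisimple part of $\g_{\Gamma_\delta}$ is the simple M\"obius algebra $\hat{\g}(\delta)$, we may write $[\g_{\Gamma_\delta},\g_{\Gamma_\delta}]=\hat{\g}(\delta)\oplus\c_\delta$, where $\c_\delta$ is the sum of the compact simple ideals. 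All three pieces $\hat{\g}(\delta)$, $\c_\delta$, $Z(\g_{\Gamma_\delta})$ are $\Theta$-invariant, so this is a $\Theta$-stable decomposition of $\g_{\Gamma_\delta}\supseteq\g_0$.

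Next I restrict to $\g_0$ and intersect with the eigenspaces of $\Theta$. Using $\a=\g_0\cap\mathfrak{p}_{0}$ and $\m=\g_0\cap\mathfrak{l}_{0}$, and the fact that each summand above is $\Theta$-stable, the intersection can be taken summand by summand. For $\mathfrak{p}_{0}$: the compact ideal $\c_\delta\subseteq\mathfrak{l}_{0}$ meets $\mathfrak{p}_{0}$ trivially, while $\a\cap\hat{\g}(\delta)=\hat{\a}$, giving at once $\a=\hat{\a}\oplus(\a\cap Z(\g_{\Gamma_\delta}))$, the first identity. For $\mathfrak{l}_{0}$: I get $\m=\hat{m}\oplus\c_\delta\oplus(\m\cap Z(\g_{\Gamma_\delta}))$, once one checks $\c_\delta\subseteq\m$. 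This last point is routine: $\c_\delta$ commutes with $\hat{\g}(\delta)\supseteq\hat{\a}$ and with $Z(\g_{\Gamma_\delta})\supseteq\a\cap Z(\g_{\Gamma_\delta})$, hence with all of $\a=\hat{\a}\oplus(\a\cap Z(\g_{\Gamma_\delta}))$, and being compact it lies in $Z_{\mathfrak{l}_{0}}(\a)=\m$.

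It then remains to identify $\c_\delta$ with the global compact factor $\g_c$ of $\g$. The inclusion $\g_c\subseteq\c_\delta$ is immediate: $\g_c\subseteq\m\subseteq\g_0\subseteq\g_{\Gamma_\delta}$, it is semisimple so $\g_c=[\g_c,\g_c]\subseteq[\g_{\Gamma_\delta},\g_{\Gamma_\delta}]$, and being a compact ideal of $\g$ it is a compact ideal of $[\g_{\Gamma_\delta},\g_{\Gamma_\delta}]$, hence a summand of $\c_\delta$. The reverse inclusion is the heart of the matter. Writing $\g=\g_{nc}\oplus\g_c$, I must exclude a nonzero compact ideal $\s:=\c_\delta\cap\g_{nc}$ of $[\g_{\Gamma_\delta},\g_{\Gamma_\delta}]$ living inside the simple non-compact factor $\g_{nc}$; for an arbitrary root such stray compact factors genuinely occur (e.g.\ the $\m$-part attached to a multiplicity-one root), so the distortion must enter. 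Here the idea is that $\s$ commutes with $\hat{\g}(\delta)$, in particular with $\g_\delta$, and $\delta$ vanishes on the semisimple algebra $\s$; via Equation~\eqref{Equa2} this makes $\ad_{\s}$ skew for $\langle,\rangle$, so for $u,v\in\s$ and $w\in\g_\delta$ one gets $\langle[u,v],w\rangle=-\langle v,[u,w]\rangle=0$. Since $\s=[\s,\s]$, and the pairing condition applied inside $\g_{\Gamma_\delta}$ makes $\s$ orthogonal to every $\g_\gamma$ with $\gamma\neq\delta$, this forces $\s\subseteq\ker\langle,\rangle=\h$.

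The main obstacle is precisely to turn $\s\subseteq\h\cap\m\cap\g_{nc}$ into $\s=0$, and to justify cleanly that $\s\subseteq\mathfrak{p}$ so that \eqref{Equa2} applies to it. I expect this to follow from the minimality built into $\Gamma_\delta=\{\,n\delta\in\Delta\,\}$ together with essentiality: a nonzero compact ideal $\s\subseteq\h$ commuting with $\hat{\g}(\delta)$ and centralizing $\a$ should either contradict the simplicity of $\g_{nc}$ obtained after modification (Lemma~\ref{Lemma1}), or be removable by a further modification that leaves $\g/\h$ unchanged, since $\s$ lies entirely in the kernel $\h$. This last deduction is the step requiring the most care; granting $\s=0$ we obtain $\c_\delta=\g_c$, and substituting into $\m=\hat{m}\oplus\c_\delta\oplus(\m\cap Z(\g_{\Gamma_\delta}))$ yields the stated decomposition of $\m$.
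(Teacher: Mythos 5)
Your structural decomposition $\g_{\Gamma_\delta}=\hat{\g}(\delta)\oplus\c_\delta\oplus Z(\g_{\Gamma_\delta})$ and the resulting identities $\a=\hat{\a}\oplus(\a\cap Z(\g_{\Gamma_\delta}))$ and $\m=\hat{\m}\oplus\c_\delta\oplus(\m\cap Z(\g_{\Gamma_\delta}))$ are sound, and your treatment of the first identity is essentially the paper's. The problem is exactly the step you flag at the end: the identification $\c_\delta=\g_c$. This is not merely delicate --- it is false, so no sharpening of the distortion argument can close the gap. Take $\g=\sp(p,q)$ with $\delta=-2e_1$, the very algebra the paper ultimately arrives at. There $\m\cong\sp(1)^{p}\oplus\sp(q-p)$ is semisimple, hence $\m=[\m,\m]\subseteq[\g_{\Gamma_\delta},\g_{\Gamma_\delta}]$, and the ideals $\sp(1)_j$ ($j\ge 2$) and $\sp(q-p)$ commute with $\g_{\pm 2e_1}$; thus $\c_\delta\cong\sp(1)^{p-1}\oplus\sp(q-p)\neq 0$ for $p+q\ge 3$, while $\g_c=0$. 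Your sub-argument that such an $\s$ lies in $\h$ is consistent with this (in the $\sp(p,q)$ model these ideals do sit inside $\h$), but $\s\subseteq\h$ does not yield $\s=0$, and no modification or essentiality argument will: these compact ideals are genuinely present. A secondary gap in the same passage is that applying Equation~\eqref{Equa2} to $u\in\s$ presupposes $\s\subseteq\p$, which you have not established.

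The paper's proof never attempts what you are stuck on. It decomposes $m\in\m$ as $x_s+x_z$ along $[\g_{\Gamma_\delta},\g_{\Gamma_\delta}]\oplus Z(\g_{\Gamma_\delta})$ and proves only that the two $\hat{\a}$-components cancel: if $x_z$ had a nonzero $\hat{\a}$-part, then $\operatorname{ad}_{x_z}$ would act invertibly on $\g_\delta$, contradicting $x_z\in Z(\g_{\Gamma_\delta})$. It makes no attempt to show that the compact ideals of $[\g_{\Gamma_\delta},\g_{\Gamma_\delta}]$ reduce to $\g_c$; indeed, for the statement to be compatible with $\sp(p,q)$ the summand $\g_{c}\oplus\m\cap Z(\g_{\Gamma_{\delta}})$ must be read as the full centralizer of $\hat{\g}(\delta)$ in $\m$, i.e.\ as absorbing your $\c_\delta$. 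Under that reading, your own decomposition $\m=\hat{\m}\oplus\c_\delta\oplus(\m\cap Z(\g_{\Gamma_\delta}))$ already finishes the argument, and the entire final stage of your plan should be discarded rather than completed.
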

\begin{proof}
Since $\delta$ is non zero on $\hat{a}$, we have that $\a=\hat{a}\oplus \operatorname{Ker}(\delta).$ But $\operatorname{Ker}(\delta)$ is contained in $\mathfrak{a} \cap Z(\mathfrak{g}_{\Gamma_{\delta}})$. Therefore $
\mathfrak{a} = \hat{\mathfrak{a}} \oplus \left( \mathfrak{a} \cap Z(\mathfrak{g}_{\Gamma_{\delta}}) \right),
$

For the second equation, let $m \in \mathfrak{m}$.  
By Equation~\ref{equaajout}, we may decompose $m$ as
$
m = x_{s} + x_{z},
$
where $x_{s} \in \mathfrak{g}_0 \cap [\mathfrak{g}_\Gamma, \mathfrak{g}_\Gamma]$ and $x_{z} \in \mathfrak{g}_0 \cap Z(\mathfrak{g}_\Gamma)$.  
According to Equation~\ref{equaajoutt}, $x_{s}$ further splits as
$
x_{s} = a_{s} + m_{s},
$
with $a_{s} \in \hat{a}$ and $m_{s} \in \hat{m} \oplus \mathfrak{g}_{c}$.  
Similarly, write $x_{z} = a_{z} + m_{z}$, where $a_{z} \in \mathfrak{a}$ and $m_{z} \in \mathfrak{m}$.  
Combining these relations yields $a_{z} = -a_{s} \in \hat{a}$. If $a_{z}\neq 0$, then 
$
\operatorname{ad}_{a_{z}}(\mathfrak{g}_{\delta}) = \mathfrak{g}_{\delta}.
$
Since $\operatorname{ad}_{m_{z}}$ is antisymmetric, it follows that
$
\operatorname{ad}_{x_{z}}(\mathfrak{g}_{\delta}) = \mathfrak{g}_{\delta}.
$
Therefore, $x_{z}$ cannot belong to $Z(\mathfrak{g}_\Gamma)$, a clear contradiction. Hence, necessarily $a_{z} = a_{s} = 0$, and thus $m$ decomposes as  
$
m = m_{s} + m_{z},
$
where $m_{s} \in \hat{m} \oplus \mathfrak{g}_{c}$ and $m_{z} \in \mathfrak{m} \cap Z(\mathfrak{g}_\Gamma).$

\end{proof}

As a consequence, we obtain:  
\[
\mathfrak{g}_{\Gamma_{\delta}} \simeq \mathfrak{so}(1, n+1) \oplus \mathfrak{l},
\]
where  
\[
\mathfrak{so}(1, n+1) \simeq \hat{\mathfrak{g}}(\delta) = \hat{\mathfrak{a}} \oplus \hat{\mathfrak{m}} \oplus \mathfrak{g}_{\delta} \oplus \mathfrak{g}_{-\delta},\] and
\[\mathfrak{l} = \mathfrak{a} \cap Z(\mathfrak{g}_{\Gamma_{\delta}}) \oplus \left( \mathfrak{g}_{c} \oplus \mathfrak{m} \cap Z(\mathfrak{g}_{\Gamma_{\delta}}) \right).
\]

\subsection{Rank two subalgebras} Let $\alpha$ be a root in $\Delta \setminus \mathbb{R}\delta$, and consider $\Gamma$ to be the smallest closed symmetric subsystem of $\Delta$ containing both $\alpha$ and $\delta$. Denote by $\hat{\g}(\alpha, \delta)$ the non-compact semisimple part of the reductive subalgebra $\g_{\Gamma}$. It is a rank-two semisimple subalgebra that satisfies the heredity principle; it contains the  M\"obius Lie algebra $\hat{\g}_{nc}$ and admits \begin{equation}
\label{eq:commutantGamma1} 
\hat{\g}(\alpha, \delta)
= \left( \mathfrak{g}_{0}\cap\hat{\g}(\alpha, \delta) \right) 
  \bigoplus_{\alpha \in \Gamma} \mathfrak{g}_{\alpha}
= \left( \mathfrak{a}\cap \hat{\g}(\alpha, \delta) \right) 
  \oplus \left( \mathfrak{m}\cap \hat{\g}(\alpha, \delta) \right) 
  \bigoplus_{\alpha \in \Gamma} \mathfrak{g}_{\alpha}
  \end{equation}
as root space decomposition. 

\section{Identification of the algebra $\g$}
\label{sect3}
The idea behind the proof of our main theorem involves a meticulous examination of the subalgebra $\m$, combined with an application of the results from \cite{BDRZ1} and \cite{Deffaf2025}. Specifically, we will consider four different cases:

\begin{enumerate}
\item $\m \subset \h$ and $\l \subset \h$,
\item $\m \subset \h$ and $\l \nsubseteq \h$,
\item $\hat{\m}\nsubseteq \h$
\item $\hat{\m}\subset\h$ and $\m \nsubseteq \h$.
\end{enumerate}
In this section, we first establish our main theorem in the first three cases. In a second step, we identify the Lie algebra arising in the fourth case, postponing the study of conformal flatness in this specific situation to the next section. Before proceeding further, let us highlight the following observation:

\begin{prop}
\label{pppprrrr}
If \(\g_{+} \subset \h\), then \(\delta\) is the minimal root.
\end{prop}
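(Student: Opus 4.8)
The plan is to argue by contradiction: assuming $\g_+\subset\h$, I will show that if $\delta$ were not the lowest (minimal) root $-\theta$ of $\Delta$ (where $\theta$ denotes the highest root), then the entire space $\g_\delta$ would be forced into $\h$, contradicting $\g_\delta\nsubseteq\h$. Note that $\g_\delta\nsubseteq\h$ does hold, since by the Hypothesis subsection $\g_\delta$ is paired with $\g_0\nsubseteq\h$; and $\delta$ is a negative root, as established in the proof of Proposition~\ref{prop5}.

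First I would record the constraint coming from the pairing condition. Let $S=\{\alpha\in\Delta\cup\{0\}:\g_\alpha\nsubseteq\h\}$. Since $\g_+\subset\h$, every element of $S$ is $\preceq 0$ in the dominance order. If $\alpha\in S$, then the pairing condition gives $\delta-\alpha\in S$, hence $\delta-\alpha\preceq 0$, i.e. $\alpha\succeq\delta$. Thus $S\subseteq\{\alpha:\delta\preceq\alpha\preceq 0\}$; equivalently, every root $\gamma\prec\delta$ satisfies $\g_\gamma\subseteq\h$. In particular $\delta=\min S$, so the easy part is that $\delta$ is minimal among roots whose space meets the complement of $\h$; the real content is that it is minimal in all of $\Delta$.

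Next I would suppose, for contradiction, that $-\delta\neq\theta$. For each simple root $\alpha_i\in\Pi$ the element $\delta-\alpha_i$ is either not a root or a root strictly below $\delta$; in both cases $\g_{\delta-\alpha_i}\subseteq\h$ by the previous paragraph, while $\g_{\alpha_i}\subset\g_+\subseteq\h$. Hence $[\g_{\alpha_i},\g_{\delta-\alpha_i}]\subseteq[\h,\h]\subseteq\h$ for every $i$, so $\sum_{i}[\g_{\alpha_i},\g_{\delta-\alpha_i}]\subseteq\h$. The crux is to show that this sum is in fact all of $\g_\delta$. I would do this by duality with respect to the Killing form $B$, which pairs $\g_\delta$ and $\g_{-\delta}$ nondegenerately: a vector $v\in\g_{-\delta}$ is $B$-orthogonal to $\sum_i[\g_{\alpha_i},\g_{\delta-\alpha_i}]$ iff $B([v,\g_{\alpha_i}],\g_{\delta-\alpha_i})=0$ for all $i$ (by invariance of $B$), and since $B$ pairs $\g_{-\delta+\alpha_i}$ with $\g_{\delta-\alpha_i}$ nondegenerately, this says exactly $[\g_{\alpha_i},v]=0$ for all $i$. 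Because $\g_+$ is generated as a Lie algebra by the simple restricted root spaces $\g_{\alpha_i}$, such a $v$ satisfies $[\g_+,v]=0$, i.e. $v$ is a highest-weight vector for the adjoint representation. As $\g$ is simple this representation is irreducible, so the submodule generated by $v$ is all of $\g$; hence every root is $\preceq-\delta$, and in particular $\theta\preceq-\delta$. Since $-\delta$ is a root, $-\delta\preceq\theta$ as well, forcing $-\delta=\theta$, contrary to assumption. Therefore no nonzero such $v$ exists, $\sum_i[\g_{\alpha_i},\g_{\delta-\alpha_i}]=\g_\delta$, and so $\g_\delta\subseteq\h$ — the desired contradiction. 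Hence $\delta=-\theta$ is the minimal root.

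The main obstacle is precisely the surjectivity claim: because the restricted root spaces may have multiplicity greater than one, one cannot simply invoke $[\g_{\alpha_i},\g_{\delta-\alpha_i}]=\g_\delta$ for a single index $i$. The Killing-form duality is what circumvents this, reducing surjectivity onto all of $\g_\delta$ to the standard fact that the only vectors annihilated by all of $\g_+$ are the highest-root vectors. The one external input I would make sure to cite is the (standard) generation of $\g_+$ by the simple restricted root spaces, on which the passage from $[\g_{\alpha_i},v]=0$ to $[\g_+,v]=0$ rests.
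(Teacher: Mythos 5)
Your proof is correct, but it takes a genuinely different and considerably longer route than the paper's. The paper argues in two lines: for any positive root $\alpha$ with $\delta-\alpha\in\Delta$, the pairing condition forces $\g_{\delta-\alpha}\subset\h$ (its pair is $\g_{\alpha}\subset\g_{+}\subset\h$), and then the lemma of \cite{Kraljevic} gives the full equality $[\g_{\delta-\alpha},\g_{\alpha}]=\g_{\delta}$, whence $\g_{\delta}\subset\h\subset\p$, contradicting $\g_{\delta}\nsubseteq\p$ established in the proof of Proposition~\ref{prop5}; so no such $\alpha$ exists and $\delta$ is minimal. The ``obstacle'' you single out --- that restricted root spaces have multiplicities, so a single bracket $[\g_{\alpha_i},\g_{\delta-\alpha_i}]$ might not exhaust $\g_{\delta}$ --- is precisely what \cite{Kraljevic} rules out, and the paper invokes that surjectivity in full strength repeatedly elsewhere (e.g.\ $\g_{-2e_1}=[\g_{-e_1},\g_{-e_1}]$). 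Your Killing-form duality plus highest-weight-vector argument is a valid workaround, but it carries two hidden costs. First, the generation of $\g_{+}$ by the simple restricted root spaces, which you present as your only external input, is itself normally proved via the same surjectivity $[\g_{\alpha_i},\g_{\gamma-\alpha_i}]=\g_{\gamma}$, so you are not really dispensing with the fact you set out to avoid. Second, the step ``the submodule generated by $v$ is all of $\g$'' requires the adjoint representation to be irreducible, i.e.\ $\g$ simple; that is not a standing hypothesis where Proposition~\ref{pppprrrr} is stated and is only available up to modification and for the non-compact simple factor containing $\delta$ (Lemma~\ref{Lemma1}), a reduction you should make explicit (the conclusion then reads ``minimal root of that factor''). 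The paper's argument needs neither irreducibility nor the reduction. Your contradiction with $\g_{\delta}\nsubseteq\h$ (rather than $\nsubseteq\p$) is fine.
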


\begin{proof}
From the proof of Proposition \ref{prop5}, we know that \(\g_{\delta} \nsubseteq \p\).

Now, let \(\alpha\) be a positive root such that \(\delta - \alpha\) is also a root. Then, \(\g_{\delta-\alpha} \subset \h\). However, by \cite{Kraljevic}, we have \(\g_{\delta} = [\g_{\delta-\alpha}, \g_{\alpha}]\). This implies that \(\g_{\delta} \subset \h \subset \p\), which leads to a clear contradiction.
\end{proof}

\subsection{First case: $\m \subset \h$ and $\l \subset \h$}
Note that, in this case, $\mathfrak{g}_{0} \subset \mathfrak{h}$, which is already excluded by our Hypothesis~\ref{Hyp}. Indeed, on the one hand, by the proof of Proposition~\ref{prop5}, we know that $\hat{\mathfrak{a}}$ is contained in $\mathfrak{h}$. On the other hand, by Proposition~\ref{pripou},
$
\mathfrak{a}
   = \hat{\mathfrak{a}} \oplus \bigl(\mathfrak{a} \cap Z(\mathfrak{g}_{\Gamma_{\delta}})\bigr).
$
Since $\mathfrak{a} \cap Z(\mathfrak{g}_{\Gamma_{\delta}})\subseteq \mathfrak{l}\subseteq \mathfrak{h}$, we deduce that $\mathfrak{a}\subseteq \mathfrak{h}$.

\subsection{Second case (Complex case): $\m \subset \h$ and $\l \nsubseteq \h$}
Let $\g^{\mathbb{C}}$ denote the complexification of $\g$, a complex semisimple Lie algebra. Let $\h^{\mathbb{C}}$ and $\p^{\mathbb{C}}$ denote the complexifications of the subalgebras $\h$ and $\p$, respectively. Since $\m$ is contained in $ \h$, it follows that $\p^{\mathbb{C}}$ is a parabolic subalgebra of $\g^{\mathbb{C}}$ that normalizes $\h^{\mathbb{C}}$. 

Consider $\langle .,.\rangle^{\mathbb{C}}$, the complexification of $\langle .,.\rangle$, defined as its bilinear extension to $\g^{\mathbb{C}}$. It is a degenerate complex bilinear symmetric form with kernel $\h^{\mathbb{C}}$.

Let  $\delta^{\mathbb{C}}:\p^{\mathbb{C}}\longrightarrow \mathbb{C}$ denote the linear complexification of $\delta:\p\longrightarrow \mathbb{R}$, satisfying  $\delta^{\mathbb{C}}(x+iy)=\delta(x)+i\delta(y)$ for every $x,y\in \p$. Thus, 
\begin{equation}
\label{Equa3}
\langle ad_p u,v\rangle^{\mathbb{C}} +\langle u,ad_p v\rangle^{\mathbb{C}}=\delta^{\mathbb{C}}(p)\langle u,v\rangle^{\mathbb{C}} \quad \text{for every } p \in \mathfrak{p}^{\mathbb{C}} \text{ and } u, v \in \mathfrak{g}^{\mathbb{C}}.
\end{equation}

Since the complexification $\a^{\mathbb{C}}$ of $\a$, is contained in the Cartan subalgebra $\a'$ of $\g^{\mathbb{C}}$, and $\delta$ is non trivial on $\a$, it follows  that $\delta^{\mathbb{C}}$ is also non trivial on $\a'$. In summary, the triple $\left(\mathfrak{g}^{\mathbb{C}},\mathfrak{h}^{\mathbb{C}}, \langle .,.\rangle^{\mathbb{C}}\right) $ satisfies the algebraic formulation as described in \cite{Deffaf2025}.

In \cite{Deffaf2025} we classify the triples $\left(\mathfrak{g}^{\mathbb{C}},\mathfrak{h}^{\mathbb{C}}, \langle .,.\rangle^{\mathbb{C}}\right) $. Since $\mathfrak{a}' \not\subseteq \mathfrak{h}^{\mathbb{C}}$, it follows from \cite[Proposition~3.2, Corollary~4.5 and Corollary~5.3]{Deffaf2025}  that, up to modification, $\mathfrak{g}^{\mathbb{C}}$ is simple and isomorphic either to $\mathfrak{sp}(n, \mathbb{C})$ or to $\mathfrak{sl}(n, \mathbb{C})$. In both cases, the triple $\left(\mathfrak{g}^{\mathbb{C}},\mathfrak{h}^{\mathbb{C}}, \langle .,.\rangle^{\mathbb{C}}\right) $ is uniquely determined. Moreover, according to \cite[Theorem~1.2]{Deffaf2025}, there exists a compact connected homogeneous complex manifold $G^{\mathbb{C}} / H^{\mathbb{C}}$ equipped with a conformally flat  holomorphic Riemannian structure such that the Lie algebras of $G^{\mathbb{C}}$ and $H^{\mathbb{C}}$ are $\mathfrak{g}^{\mathbb{C}}$ and $\mathfrak{h}^{\mathbb{C}}$, respectively. Furthermore, the degenerate complex bilinear symmetric form induced by this structure on $\mathfrak{g}^{\mathbb{C}}$ coincides with $\langle \cdot, \cdot \rangle^{\mathbb{C}}$.

Let $G'$ and $H'$ be the connected real Lie subgroups of $G^{\mathbb{C}}$ and $H^{\mathbb{C}}$ associated with the real Lie subalgebras $\mathfrak{g}$ and $\mathfrak{h}$, respectively. In this context, the homogeneous space $G'/H'$ is a real analytic submanifold of $G^{\mathbb{C}} / H^{\mathbb{C}}$, viewed as a real analytic space. It is endowed with a conformal pseudo-Riemannian structure whose complexification corresponds to the conformal holomorphic Riemannian structure of $G^{\mathbb{C}} / H^{\mathbb{C}}$. In particular, $G'/H'$ is also conformally flat. Moreover, one can check that $G'/H'$ is a covering space of $G/H$, it follows that $G/H$ is conformally flat as well.

\subsection{Third case: $\hat{\m} \subseteq \h$ and  $\m\nsubseteq \h $}
\label{subsection5.2}
Since $\hat{\m} \subseteq \h$,  by \cite[Proposition~4.1]{BDRZ1} (more precisely, its proof), we have $\h \cap \g_0=\left( \h \cap \mathfrak{g}_{\Gamma_{\delta}}\right)\cap \g_0$ has codimension one in $\g_0$. Since $\m \nsubseteq \h$ and $\g_0 = \a \oplus \m$, it follows that $\h \cap \g_0$ projects surjectively onto $\a$. 

Let  $\alpha$ be a positive root and let $v\in\a$  be such that $\alpha(v)\neq 0$. Then $\operatorname{ad}_ {v}(\g_\alpha)=\g_\alpha$. Now, let $m_v\in\m$ such that
$v+m_v\in\h\cap\g_0$. If $m_v=0$, then  $\operatorname{ad}_ {v+m_v}(\g_\alpha)=\operatorname{ad}_v(\g_\alpha)=\g_\alpha$. Otherwise, since $\operatorname{ad}_ {m_v}$ is antisymmetric and $\operatorname{ad}_v(\g_\alpha)=\g_\alpha$, it still follows that  $\operatorname{ad}_ {v+m_v}(\g_\alpha)=\g_\alpha$. Thus $\g_\alpha\subset \h$ and hence $\hat{\a}\oplus\g_+\subset \h$. 
On the other hand, since $\hat{\m} \subseteq \h$, we have  $\operatorname{ad}_{\g_{-\delta}}(\g)\subset \hat{\a}\oplus \hat{\m}\oplus\g_{+}\subset \h$. 

Now, let $X$ be a non trivial element of $\g_{-\delta}$. Then $d_{1}e^{X}$ acts trivially on $\g/\h$, whereas $e^{X}$ is non-trivial. We are therefore exactly in the setting of Frances-Melnick \cite[Theorem~1.8]{FM}, which implies that our space is conformally flat.

\subsection{Fourth case: $\hat{\m} \nsubseteq \h$ }
Using results from \cite{BDRZ1} and the classification of simple real Lie algebras, we are going to identify the Lie algebra $\mathfrak{g}$. We have  that the triple $(\g_{\Gamma_{\delta}},\h \cap \g_{\Gamma_{\delta}}, \langle,\rangle _{\mathfrak{g}_{\Gamma_{\delta}}})$ satisfies the heredity principale. Since $\hat{\m} \nsubseteq \h$, this corresponds to the case described in \cite[Subsection~4.2, Subsection~4.3]{BDRZ1}. It then follows  that: 
\begin{prop} We have:
\label{prop28}
\begin{itemize}
  \item $\hat{\g}(\delta)$ is isomorphic to $\so(1,4)$;
  \item $\g_{\delta} \cap \h = 0$ and $\hat{\m} \cap \h = 0$;
  \item $\g_{\delta} \simeq  \mathbb{R}^{3}$;
  \item $\hat{\m} \simeq \so(3)$ is an ideal of $\m$.
\end{itemize}
\end{prop}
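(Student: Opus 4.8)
The plan is to exploit the heredity principle: the triple $(\g_{\Gamma_{\delta}},\h\cap\g_{\Gamma_{\delta}},\langle,\rangle_{\g_{\Gamma_{\delta}}})$ satisfies the algebraic formulation of Subsection~\ref{subsect14}, and by Proposition~\ref{prop5} its non-compact semisimple part is the M\"obius algebra $\hat{\g}(\delta)\simeq\so(1,n+1)$. This places us exactly in the M\"obius setting of \cite{BDRZ1}, and since $\hat{\m}\nsubseteq\h$ we are in the situation treated in \cite[Subsections~4.2,~4.3]{BDRZ1}; I would reconstruct the argument as follows.

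First I would recall from the proof of Proposition~\ref{prop5} that $\hat{\a}\subset\h$ and $\g_{-\delta}\subset\h$, while $\g_{\delta}\nsubseteq\h$. Inside $\so(1,n+1)$ the root space $\g_{\delta}$ carries the standard representation $\mathbb{R}^{n}$ of $\hat{\m}\simeq\so(n)$. The pairing condition forces $\g_{\delta}$ to be $\langle,\rangle$-orthogonal to every root space except $\g_{0}$, hence effectively to $\hat{\m}$ (as $\hat{\a}\subset\h$); moreover $\g_{\delta}\perp\g_{\delta}$ and $\hat{\m}\perp\hat{\m}$ since neither $2\delta$ nor $0$ equals $\delta$. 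Thus $\langle,\rangle$ reduces on $\hat{\g}(\delta)$ to a pairing between $\hat{\m}$ and $\g_{\delta}$, and the invariance relation~\eqref{Equa2} (using $\delta|_{\hat{\m}}=0$) makes the map $\phi:\g_{\delta}\to\hat{\m}^{*}$, $\phi(u)=\langle u,\cdot\rangle|_{\hat{\m}}$, equivariant with kernel $\g_{\delta}\cap\h$.

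The heart of the matter is a representation-theoretic dichotomy. Since $\g_{\delta}\simeq\mathbb{R}^{n}$ is an irreducible $\so(n)$-module and $\g_{\delta}\nsubseteq\h$ forces $\phi\neq0$, Schur's lemma gives $\g_{\delta}\cap\h=\ker\phi=0$, so $\phi$ embeds $\mathbb{R}^{n}$ into $\hat{\m}^{*}\simeq\so(n)\simeq\Lambda^{2}\mathbb{R}^{n}$. The standard module occurs in $\Lambda^{2}\mathbb{R}^{n}$ only for $n=3$, where $\Lambda^{2}\mathbb{R}^{3}\simeq\mathbb{R}^{3}$: for $n=2$ the target is too small, for $n=4$ one has $\Lambda^{2}\mathbb{R}^{4}\simeq\mathbb{R}^{3}_{+}\oplus\mathbb{R}^{3}_{-}$ with no standard summand, and for $n\geq5$ the adjoint and standard modules are inequivalent. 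Hence $n=3$, giving $\hat{\g}(\delta)\simeq\so(1,4)$ and $\g_{\delta}\simeq\mathbb{R}^{3}$. As $\dim\hat{\m}=3=\dim\g_{\delta}$, the injective $\phi$ is an isomorphism, the pairing is nondegenerate on both sides, and therefore $\hat{\m}\cap\h=0$ with $\hat{\m}\simeq\so(3)$.

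For the final claim I would use Proposition~\ref{pripou}, which writes $\m=\hat{\m}\oplus\bigl(\g_{c}\oplus(\m\cap Z(\g_{\Gamma_{\delta}}))\bigr)$. The compact factor $\g_{c}$ commutes with the non-compact simple factor containing $\hat{\g}(\delta)$, and $\m\cap Z(\g_{\Gamma_{\delta}})$ centralizes all of $\g_{\Gamma_{\delta}}\supseteq\hat{\m}$; hence $[\m,\hat{\m}]=[\hat{\m},\hat{\m}]\subseteq\hat{\m}$, so $\hat{\m}$ is an ideal of $\m$. The hard part will be the equivariance input of the second paragraph: one must know that $\ad_{w}$ is $\langle,\rangle$-antisymmetric for $w\in\hat{\m}$, i.e.\ that $\hat{\m}\subset\p$ with $\delta|_{\hat{\m}}=0$. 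This is precisely the technical content supplied by the M\"obius analysis of \cite[Subsections~4.2,~4.3]{BDRZ1}; once it is granted, the representation-theoretic dichotomy closes the argument.
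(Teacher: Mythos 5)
Your proposal follows the same route as the paper, whose entire proof of Proposition~\ref{prop28} consists of one sentence: the heredity principle places the triple $(\g_{\Gamma_{\delta}},\h\cap\g_{\Gamma_{\delta}},\langle,\rangle_{\g_{\Gamma_{\delta}}})$ in the M\"obius setting, and the four conclusions are quoted from \cite[Subsections~4.2--4.3]{BDRZ1}. You go further by reconstructing the internal argument (an equivariant nonzero map $\mathbb{R}^{n}\to\Lambda^{2}\mathbb{R}^{n}$ exists only for $n=3$), and that reconstruction is essentially sound, including the final point that $\hat{\m}$ is an ideal of $\m$ via Proposition~\ref{pripou}. Two caveats. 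First, your justification that $\phi\neq 0$ is not right as stated: $\g_{\delta}\nsubseteq\h$ only says that $\g_{\delta}$ is paired with some part of $\g_{0}=\a\oplus\m$, and a priori this could be $\a\oplus\g_{c}\oplus\bigl(\m\cap Z(\g_{\Gamma_{\delta}})\bigr)$ rather than $\hat{\m}$, in which case $\phi=0$. The correct input is the hypothesis of this case, $\hat{\m}\nsubseteq\h$: by the pairing condition, $\hat{\m}\subset\g_{0}$ can only be paired with $\g_{\delta}$ (since $0+\alpha=\delta$ forces $\alpha=\delta$), so $\hat{\m}\nsubseteq\h$ is exactly the statement $\phi\neq 0$ (equivalently $\psi\neq0$ for the transposed map $\hat{\m}\to\g_{\delta}^{*}$, whose kernel is then killed by irreducibility of the adjoint representation of $\so(3)$). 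Second, the equivariance of $\phi$ requires $\ad_{w}$ to be $\langle,\rangle$-antisymmetric for $w\in\hat{\m}$, i.e.\ $\hat{\m}\subset\p$ with $\delta|_{\hat{\m}}=0$; you defer this to \cite{BDRZ1}, which is consistent with the paper's own level of detail (it defers the whole proposition there), but it is a genuine missing ingredient of your write-up at exactly the point you flag, not something that follows from what is established elsewhere in this paper.
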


Therefore, since \(\mathfrak{g}_{\delta}\) is paired with $
\mathfrak{g}_{0}=\mathfrak{a}\oplus\hat{\mathfrak{m}}\oplus\mathfrak{g}_{c}\oplus\bigl(\mathfrak{m}\cap Z(\mathfrak{g}_{\Gamma_{\delta}})\bigr)
$, and non-degenerately paired precisely with $\hat{m}$, we deduce that $\g_{0}\cap\h$ projects surjectively on $\a\oplus \g_{c}\oplus\left(  \mathfrak{m} \cap Z(\mathfrak{g}_{\Gamma_{\delta}})\right)$, and \begin{equation}\label{equa93}
\g_{0}/\g_{0}\cap\h\simeq \hat{m}
\end{equation}
This shows, in particular, that---up to modification---one may disregard the compact semisimple part~$\mathfrak{g}_c$ and assume that~$\mathfrak{g}$ is semisimple of non compact type. Combined with Lemma~\ref{Lemma1}, this actually yields:
\begin{lemm}
\label{simplenoncompact}
Up to modification, \(\mathfrak{g}\) is simple of non-compact type.
\end{lemm}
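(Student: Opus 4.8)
The plan is to combine two facts already at our disposal: Lemma~\ref{Lemma1}, which asserts that up to modification the non-compact semisimple part of $\g$ is simple, and the decomposition obtained just above, which lets us throw away the compact semisimple factor $\g_c$. Once I check that discarding $\g_c$ is a bona fide modification, the conclusion is purely formal.

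First I would make the removal of $\g_c$ precise. We have $\g_c\subseteq\g_0$ (indeed $\a\subseteq\g_{nc}$ commutes with the ideal $\g_c$), in accordance with the decomposition $\g_0=\a\oplus\hat{\m}\oplus\g_c\oplus(\m\cap Z(\g_{\Gamma_{\delta}}))$ of Proposition~\ref{pripou}. The surjectivity statement preceding equation~(\ref{equa93}) says that $\g_0\cap\h$ projects onto $\a\oplus\g_c\oplus(\m\cap Z(\g_{\Gamma_{\delta}}))$ along $\hat{\m}$. Hence any $z\in\g_c$ may be written $z=h-w$ with $h\in\g_0\cap\h$ and $w\in\hat{\m}$, so that $\g_c\subseteq\h+\hat{\m}\subseteq\h+\g_{nc}$. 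It follows that $\g=\g_{nc}+\g_c\subseteq\g_{nc}+\h$, i.e. $\g_{nc}$ projects surjectively onto $\g/\h$ and is therefore a modification of $\g$.

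After this modification the algebra equals $\g_{nc}$, which is semisimple of non-compact type, and the triple $(\g_{nc},\h\cap\g_{nc},\langle,\rangle_{\g_{nc}})$ still satisfies the algebraic formulation of Subsection~\ref{subsect14}. I would then invoke Lemma~\ref{Lemma1} once more: since $\g_{nc}$ has no compact simple ideals, its non-compact semisimple part is all of $\g_{nc}$, which up to a further modification is simple. Composing the two modifications produces an algebra that is simple and of non-compact type, which is exactly the claim.

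I expect the only delicate point to be the first step, namely verifying that $\g_{nc}$ surjects onto $\g/\h$; this hinges on the pairing-driven surjectivity of $\g_0\cap\h$ onto the complement of $\hat{\m}$ in $\g_0$, established in the fourth-case discussion. Everything afterwards is formal---one merely chains Lemma~\ref{Lemma1} with the reduction to non-compact type---so the remaining care is simply the routine observation that modifications compose and preserve the algebraic formulation.
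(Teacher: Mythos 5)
Your proposal is correct and follows essentially the same route as the paper: the paper likewise uses the surjectivity of $\g_0\cap\h$ onto $\a\oplus\g_c\oplus\bigl(\m\cap Z(\g_{\Gamma_{\delta}})\bigr)$ (Equation~\ref{equa93}) to discard $\g_c$ up to modification, and then invokes Lemma~\ref{Lemma1} to get simplicity. Your write-up is in fact slightly more explicit than the paper's, which leaves the verification that $\g_{nc}$ surjects onto $\g/\h$ implicit.
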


Thanks to Lemma~\ref{simplenoncompact}, from now on and until the end of the paper, we assume that~$\mathfrak{g}$ is simple of non-compact type. According to~\cite[Theorem~6.105]{K}, the simple non-compact real Lie algebras are:

\medskip

\noindent
\textbf{1.} The non-exceptional complex simple Lie algebras viewed as real:
$\sl(n,\C)$, $\so(n,\C)$, $\sp(n,\C)$.

\smallskip

\noindent
\textbf{2.} The exceptional complex simple Lie algebras viewed as real:
$E_{6}$, $E_{7}$, $E_{8}$, $G_{2}$, $F_{4}$.

\smallskip

\noindent
\textbf{3.} The algebras $\su(p,q)$ with $p \ge q > 0$ and $p+q \ge 2$.

\smallskip

\noindent
\textbf{4.} The algebras $\so(p,q)$ with $p > q > 0$, $p+q$ odd, and $p+q \ge 5$.

\smallskip

\noindent
\textbf{5.} The algebras $\so(p,q)$ with $p \ge q > 0$, $p+q$ even, and $p+q \ge 8$.

\smallskip

\noindent
\textbf{6.} The algebras $\sp(p,q)$ with $p \ge q > 0$ and $p+q \ge 3$.

\smallskip

\noindent
\textbf{7.} $\sp(n,\R)$ for $n \ge 3$.

\smallskip

\noindent
\textbf{8.} $\sl(n,\R)$ for $n \ge 3$.

\smallskip

\noindent
\textbf{9.} $\sl(n,\H)$ for $n \ge 2$.

\smallskip

\noindent
\textbf{10.} $\so^{*}(2n)$ for $n \ge 4$.

\smallskip

\noindent
\textbf{11.} The twelve exceptional, non-complex, non-compact real simple Lie algebras:
$\mathrm{E}_{\mathrm{I}}, \mathrm{E}_{\mathrm{II}}, \mathrm{E}_{\mathrm{III}},
\mathrm{E}_{\mathrm{IV}}, \mathrm{E}_{\mathrm{V}}, \mathrm{E}_{\mathrm{VI}},
\mathrm{E}_{\mathrm{VII}}, \mathrm{E}_{\mathrm{VIII}}, \mathrm{E}_{\mathrm{IX}},
\mathrm{E}_{\mathrm{FI}}, \mathrm{E}_{\mathrm{FII}}, \mathrm{E}_{\mathrm{G}}$.

A quick inspection of the Data for simple Lie algebras  in \cite[Appendix~C]{K} shows the following:
\begin{itemize}
\item All complex simple Lie algebras have root spaces of real dimension $2$. 
\item The Lie algebras $\mathfrak{sp}(n, \mathbb{R})$ and $\mathfrak{sl}(n, \mathbb{R})$ are split; hence their root spaces are one-dimensional.
\item  The Lie algebra $\mathfrak{sl}(n, \mathbb{H})$ has root spaces of dimension $4$. 
\item For $\mathfrak{so}(p, q)$ (with $p\geq q>1$), $\mathfrak{so}^*(2n)$ and $\mathfrak{su}(p, q)$, the root space corresponding to the minimal root is also one-dimensional.
\item Among the exceptional non-complex non-compact simple Lie algebras $\operatorname{E}{\operatorname{I}}$, $\operatorname{E}{\operatorname{V}}$, $\operatorname{E}{\operatorname{VIII}}$, $\operatorname{F}{\operatorname{I}}$, and $\operatorname{G}$ all have $\m = 0$. In contrast, for $\operatorname{E}{\operatorname{II}}$, $\m = \mathbb{R}^2$, while for $\operatorname{E}{\operatorname{IV}}$, $\operatorname{E}{\operatorname{VII}}$, $\operatorname{E}{\operatorname{IX}}$, $\operatorname{F}{\operatorname{II}}$, $\m $ is a simple Lie algebra. 
\item In the case of  $\operatorname{E}{\operatorname{III}}$, $\m=\mathfrak{su}(4)\oplus\mathbb{R}$.
\item Finally, the real rank-one subalgebras of  $\operatorname{E}{\operatorname{IV}}$  are $\mathfrak{so}(1, 2)$ and $\mathfrak{so}(1, 5)$.
\end{itemize}
Hence, combined with Proposition~\ref{prop28}, we conclude:

\begin{prop}
The only possible case for $\mathfrak{g}$ is $\sp(p,q)$.
\end{prop}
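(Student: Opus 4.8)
The plan is to eliminate, one type at a time, every entry of the classification list of simple non-compact real Lie algebras, retaining only $\sp(p,q)$. By Lemma~\ref{simplenoncompact} we may assume $\mathfrak{g}$ is simple of non-compact type, and the whole argument rests on the three structural facts supplied by Proposition~\ref{prop28}: the minimal root space has real dimension $3$ (namely $\g_{\delta}\simeq\R^{3}$), the subalgebra $\hat{\m}\simeq\so(3)$ is a \emph{nonzero} ideal of $\m$, and the rank-one subalgebra attached to $\delta$ satisfies $\hat{\g}(\delta)\simeq\so(1,4)$. It therefore suffices to show that at least one of these three conditions fails for each algebra on the list other than $\sp(p,q)$.

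First I would handle the complex-as-real and the remaining classical families using only $\dim_{\R}\g_{\delta}=3$ together with the multiplicity data recalled after the classification. Every root space of a complex simple algebra has real dimension $2$, those of the split forms $\sl(n,\R)$ and $\sp(n,\R)$ have dimension $1$, and those of $\sl(n,\H)$ have dimension $4$, so none of these even admits a $3$-dimensional root space. For $\so(p,q)$ with $p\ge q>1$, for $\so^{*}(2n)$, and for $\su(p,q)$, the minimal root space is one-dimensional; recalling that $\delta$ is the minimal root (Proposition~\ref{pppprrrr}), this contradicts $\dim_{\R}\g_{\delta}=3$. Here the minimality of $\delta$ is essential: a short root could a priori carry multiplicity $3$, so it is precisely the fact that $\delta$ is the lowest root that forecloses those possibilities. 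Thus the only classical family whose minimal root space can have dimension $3$ is $\sp(p,q)$, whose long root indeed has multiplicity $3$, in agreement with $\hat{\g}(\delta)\simeq\sp(1,1)\simeq\so(1,4)$.

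Next I would dispose of the exceptional real forms through the ideal structure of $\m$. Since $\hat{\m}\simeq\so(3)$ is a nonzero ideal of $\m$, we need $\m\neq 0$; this excludes $\mathrm{E}_{\mathrm{I}},\mathrm{E}_{\mathrm{V}},\mathrm{E}_{\mathrm{VIII}},\mathrm{F}_{\mathrm{I}}$ and $\mathrm{G}$, for which $\m=0$. The abelian algebra $\m=\R^{2}$ of $\mathrm{E}_{\mathrm{II}}$ contains no copy of $\so(3)$, and the algebra $\m=\su(4)\oplus\R$ of $\mathrm{E}_{\mathrm{III}}$ has as its only ideals $0,\R,\su(4)$ and $\su(4)\oplus\R$, none isomorphic to $\so(3)$; both are excluded. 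For the four remaining forms $\mathrm{E}_{\mathrm{IV}},\mathrm{E}_{\mathrm{VII}},\mathrm{E}_{\mathrm{IX}},\mathrm{F}_{\mathrm{II}}$ the algebra $\m$ is simple, so a nonzero ideal isomorphic to $\so(3)$ forces $\m\simeq\so(3)$. This is incompatible with the recorded data: for $\mathrm{E}_{\mathrm{IV}}$ the rank-one subalgebras are only $\so(1,2)$ and $\so(1,5)$, so $\hat{\g}(\delta)$ cannot be $\so(1,4)$; for $\mathrm{E}_{\mathrm{VII}},\mathrm{E}_{\mathrm{IX}},\mathrm{F}_{\mathrm{II}}$ the simple algebra $\m$ is of type $\so(8)$ or $\so(7)$, never $\so(3)$. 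Hence no exceptional form survives, and combining the two steps leaves $\sp(p,q)$ as the only possibility.

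The delicate point is twofold. The comparison in the classical step is only legitimate once $\delta$ has been identified as the \emph{minimal} root, which is the input allowing one to read off a one-dimensional root space; without it, short roots of large multiplicity would reopen cases such as $\so(p,q)$. The second, and genuinely data-dependent, obstacle is the cluster $\mathrm{E}_{\mathrm{VII}},\mathrm{E}_{\mathrm{IX}},\mathrm{F}_{\mathrm{II}}$: the classification alone tells us only that their $\m$ is simple, so ruling them out rigorously requires the extra information — extracted from the Appendix~C tables — that this simple $\m$ is never $\so(3)$ (equivalently, that their minimal root space never has dimension $3$). This is the one place where the argument leans on the fine structural data rather than on a purely formal manipulation.
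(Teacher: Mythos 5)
Your proposal is correct and follows essentially the same route as the paper: the paper's own justification is precisely the bulleted inspection of the Appendix~C data (root-space dimensions, the structure of $\m$, and the rank-one subalgebras) combined with Proposition~\ref{prop28}, and you have simply made each elimination explicit, correctly isolating the two genuinely data-dependent points (the minimality of $\delta$ for $\so(p,q)$ with $p-q=3$, and the identification of the simple $\m$ for $\mathrm{E}_{\mathrm{VII}},\mathrm{E}_{\mathrm{IX}},\mathrm{F}_{\mathrm{II}}$). The only caveat is that your appeal to Proposition~\ref{pppprrrr} presupposes $\g_{+}\subset\h$, which at this stage must be extracted from Equation~\eqref{equa93} by the computation later recorded as Proposition~\ref{porpro}; the paper leaves this step equally implicit when it invokes the multiplicity of the minimal root.
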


\section{Conformal Flatness}
\label{sect4}
From now on, we assume that \(\mathfrak{g} = \mathfrak{sp}(p, q)\). The root system of \(\mathfrak{sp}(p, q)\) is of type \((BC)_p\) when \(p < q\), and of type \(C_p\) when \(p = q\). Up to isomorphism, it can be described--like all other root systems--using the canonical basis \((e_1, \ldots, e_p)\) of \(\mathbb{R}^p\) (See for instance \cite[Appendix~C]{K}). We will assume, up to isomorphism, that the root system $\Delta$ of $\mathfrak{g}$ is the canonical root system endowed with its canonical order.
\begin{prop}
\label{porpro}
$\g_{+}\subset \h$.
\end{prop}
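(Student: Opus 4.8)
The plan is to read off from the $(BC)_{p}$ (resp. $C_{p}$) root system of $\mathfrak{sp}(p,q)$ which positive root spaces can fail to lie in $\h$, and then to eliminate the few remaining candidates using that $\mathfrak{p}$ normalizes $\h$.

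First I would pin down $\delta$. By Proposition~\ref{prop28} the space $\g_{\delta}\simeq\mathbb{R}^{3}$ is three-dimensional, and since $\delta$ is a negative root (proof of Proposition~\ref{prop5}) while the only restricted roots of $\mathfrak{sp}(p,q)$ whose root space has dimension $3$ are the $\pm 2e_{i}$, we must have $\delta=-2e_{j}$ for some index $j$. Recall next that $\h$ is the kernel of $\langle\cdot,\cdot\rangle$ and, by Equation~\ref{Equa2} applied to $\mathfrak{a}$, that $\g_{\alpha}$ is orthogonal to every $\g_{\beta}$ with $\beta\neq\delta-\alpha$. Hence any root space $\g_{\alpha}$ for which $\delta-\alpha\notin\Delta\cup\{0\}$ is automatically contained in $\h$. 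A direct inspection of the root system, with $\delta=-2e_{j}$, shows that for a \emph{positive} root $\alpha$ the element $\delta-\alpha$ is a root only when $\alpha=e_{k}-e_{j}$ with $k<j$, in which case $\delta-\alpha=-e_{k}-e_{j}$. Thus every positive root space already lies in $\h$ except possibly the spaces $\g_{e_{k}-e_{j}}$ with $k<j$, and it remains to handle these.

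The main step, and the point I expect to be delicate, is to show $\g_{e_{k}-e_{j}}\subseteq\h$ for $k<j$; I would argue through the paired negative space. Since $2e_{k}-2e_{j}$ is not a root and $2e_{k}\neq 2e_{j}$, the space $\g_{-2e_{k}}$ is orthogonal to all of $\g$, so $\g_{-2e_{k}}\subseteq\h$. On the other hand $\g_{e_{k}-e_{j}}\subseteq\g_{+}\subseteq\mathfrak{p}$ and $\mathfrak{p}$ normalizes $\h$, whence $[\g_{e_{k}-e_{j}},\g_{-2e_{k}}]\subseteq\h$. Using the surjectivity of brackets of restricted root spaces (\cite{Kraljevic}) we get $[\g_{e_{k}-e_{j}},\g_{-2e_{k}}]=\g_{-e_{k}-e_{j}}$, so $\g_{-e_{k}-e_{j}}\subseteq\h$. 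Finally $\g_{e_{k}-e_{j}}$ pairs only with $\g_{-e_{k}-e_{j}}$, which now lies in the kernel of $\langle\cdot,\cdot\rangle$; therefore $\g_{e_{k}-e_{j}}$ is orthogonal to all of $\g$ as well, i.e. $\g_{e_{k}-e_{j}}\subseteq\h$. Combining the two cases yields $\g_{+}\subseteq\h$.

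The hard part will be justifying the identity $[\g_{e_{k}-e_{j}},\g_{-2e_{k}}]=\g_{-e_{k}-e_{j}}$, where one must know the bracket is onto and not merely contained in $\g_{-e_{k}-e_{j}}$. This is precisely the kind of nonvanishing statement for which \cite{Kraljevic} is already invoked elsewhere in the paper, and it may be checked inside the rank-two subsystem $\{\pm 2e_{k},\pm 2e_{j},\pm e_{k}\pm e_{j}\}$---a copy of $\mathfrak{sp}(2,2)$ furnished by the heredity principle---where the quaternionic structure of the root spaces makes the computation explicit.
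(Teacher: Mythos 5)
Your proof is correct, but it follows a genuinely different route from the paper's. The paper's argument is uniform and does not touch the $\sp(p,q)$ root system at all: from the fourth-case analysis it knows that $\h\cap\g_{0}$ projects onto $\a$, so for each positive root $\alpha$ it picks $v+m_{v}\in\h\cap\g_{0}$ with $\alpha(v)\neq 0$, observes that $\operatorname{ad}_{v+m_{v}}$ is onto $\g_{\alpha}$ (the $\a$-part acts by the nonzero scalar $\alpha(v)$ and the $\m$-part antisymmetrically), and concludes $\g_{\alpha}=\operatorname{ad}_{v+m_{v}}(\g_{\alpha})\subseteq[\h,\p]\subseteq\h$ since $\g_{\alpha}\subseteq\g_{+}\subseteq\p$ normalizes $\h$ --- three lines, but it leans on the surjectivity $\h\cap\g_{0}\twoheadrightarrow\a$ established just beforehand. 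You instead do the root-system bookkeeping directly: $\delta=-2e_{j}$ by the multiplicity count of Proposition~\ref{prop28}, the pairing condition kills every positive root space except the $\g_{e_{k}-e_{j}}$ with $k<j$, and those are eliminated via $\g_{-2e_{k}}\subseteq\h$, the normalization $[\p,\h]\subseteq\h$, and the surjectivity of restricted-root brackets from \cite{Kraljevic} (which is exactly the statement used repeatedly elsewhere in the paper, so no extra verification is needed there). Your case analysis is complete --- for positive $\alpha$ the element $\delta-\alpha$ is indeed a root only for $\alpha=e_{k}-e_{j}$, $k<j$ --- and each step is justified by facts the paper has already established. What each approach buys: the paper's proof is shorter and type-independent but is not self-contained (it reuses Equation~\ref{equa93} and its surroundings); yours is self-contained modulo the pairing condition and the normalizer property, at the cost of being specific to the $(BC)_{p}$/$C_{p}$ root system, and it has the side benefit of locating $\delta$ among the $\pm 2e_{i}$ along the way rather than deferring that to Proposition~\ref{p}.
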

\begin{proof}
Let $\alpha$ be a positive root, and take $v\in\mathfrak{a}$ such that $\alpha(v)\neq 0$. 
Let $m_v\in\mathfrak{m}$ be such that $v+m_v\in\mathfrak{h}\cap\mathfrak{g}_0$. 
As in Subsection~\ref{subsection5.2}, one obtains $
\operatorname{ad}_{\,v+m_v}(\mathfrak{g}_\alpha)=\mathfrak{g}_\alpha
$. It then  follows that $\mathfrak{g}_\alpha\subset \mathfrak{h}$, and hence $\mathfrak{g}_{+}\subset \mathfrak{h}$.

\end{proof}

Let $\alpha$ be a root in $\Delta \setminus \mathbb{R}\delta$, and let $\hat{\g}(\alpha, \delta)$ denote the corresponding non-compact semisimple subalgebra of rank-two. 

\begin{lemm}
\label{prop289}
The semisimple Lie algebra $\hat{\g}(\alpha, \delta)$ is not  of type~$\mathrm{A}_{2}$.
\end{lemm}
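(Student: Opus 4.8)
The plan is to argue entirely at the level of the restricted root system, exploiting the fact that $\delta$ is a \emph{long} root of $\mathfrak{sp}(p,q)$. First I would pin down $\delta$. By Proposition~\ref{porpro} we have $\g_{+}\subset\h$, so Proposition~\ref{pppprrrr} applies and $\delta$ is the minimal root of $\Delta$. For the canonical root system of $\mathfrak{sp}(p,q)$ (type $C_p$ if $p=q$, type $(BC)_p$ if $p<q$) the minimal root is $-2e_1$, which in both cases is a long root. This is consistent with $\g_{\delta}\simeq\R^{3}$ from Proposition~\ref{prop28}: the long roots $\pm 2e_i$ are exactly the ones whose root space is $3$-dimensional, so even without invoking minimality, the dimension count forces $\delta$ to be long.

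Next I would observe that the root system of $\hat{\g}(\alpha,\delta)$ is precisely the rank-two closed symmetric subsystem $\Gamma$ generated by $\alpha$ and $\delta$. Indeed, any compact semisimple factor of $[\g_{\Gamma},\g_{\Gamma}]$ is anisotropic and contributes no restricted roots, so passing from $[\g_{\Gamma},\g_{\Gamma}]$ to its non-compact part $\hat{\g}(\alpha,\delta)$ leaves the restricted root system unchanged. Hence "$\hat{\g}(\alpha,\delta)$ is of type $A_2$" means exactly that $\Gamma$ is an $A_2$ subsystem of $\Delta$, and it suffices to rule this out.

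The heart of the argument is a length comparison. Assume for contradiction that $\Gamma$ is of type $A_2$. In an $A_2$ root system all roots are Weyl-conjugate, hence of equal length, and no two of them are orthogonal (every pairwise inner product is $\pm\tfrac12|\beta|^2$ or $\pm|\beta|^2$, never $0$). Since $\delta\in\Gamma$ is long, every root of $\Gamma$ is long, so $\alpha$ is a long root of $\mathfrak{sp}(p,q)$, i.e. $\alpha=\pm 2e_j$. Because $\alpha\notin\R\delta=\R e_1$ we must have $j\neq 1$, whence $\langle\alpha,\delta\rangle=0$. This orthogonality contradicts the $A_2$ structure, giving the claim.

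The step I expect to require the most care is the first one: confirming that $\delta$ really is a longest root and reading off the length and orthogonality relations in the correct normalization. Once $\delta$ is identified as $-2e_1$ (equivalently, once $\g_{\delta}\simeq\R^{3}$ is used to force $\delta$ long), the contradiction is immediate, since the features distinguishing $A_2$ among the rank-two systems — equality of all root lengths together with the absence of any orthogonal pair of roots — are incompatible with the fact that the long roots $\pm 2e_i$ of $\mathfrak{sp}(p,q)$ are mutually orthogonal.
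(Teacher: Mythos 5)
Your proof is correct, but it takes a genuinely different route from the paper's. The paper argues intrinsically: since $\g_{\delta}\simeq\R^{3}$ is a root space of $\hat{\g}(\alpha,\delta)$ by Proposition~\ref{prop28} and Equation~\ref{eq:commutantGamma1}, it suffices to note that a real simple Lie algebra with restricted root system of type $\mathrm{A}_{2}$ is one of $\sl(3,\R)$, $\sl(3,\C)$, $\sl(3,\H)$ (restricted root multiplicities $1$, $2$, $4$), none of which has a $3$-dimensional root space. (Strictly, $\mathrm{E}_{\mathrm{IV}}$ with multiplicity $8$ should be added to that list, but this does not affect the conclusion.) You instead argue extrinsically, inside the ambient root system of $\sp(p,q)$: the same dimension count forces $\delta$ to be the long root $-2e_{1}$ of $C_{p}$ or $(\mathrm{BC})_{p}$, an $\mathrm{A}_{2}$ subsystem through $\delta$ would then consist entirely of long roots $\pm 2e_{i}$, and these are mutually orthogonal, which is incompatible with $\mathrm{A}_{2}$. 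The paper's argument does not use the standing hypothesis $\g=\sp(p,q)$ but leans on the classification of real forms; yours uses that hypothesis and needs only elementary root-system geometry. Two small remarks. First, your appeal to Propositions~\ref{porpro} and~\ref{pppprrrr} is superfluous, since the multiplicity count already identifies $\delta$ as long, which is all you use. Second, the one step you pass over quickly is the translation of ``$\hat{\g}(\alpha,\delta)$ has type $\mathrm{A}_{2}$'' into ``all roots of $\Gamma$ have equal length for the \emph{ambient} inner product on $\a^{*}$''; this is legitimate because the restriction of the ambient Weyl-invariant form to $\operatorname{span}(\Gamma)$ is invariant under the Weyl group of $\Gamma$ and hence, $\mathrm{A}_{2}$ being irreducible, proportional to its canonical form, but it deserves a sentence.
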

\begin{proof}

From Equation~\ref{eq:commutantGamma1} and Proposition~\ref{prop28}, we know that $\mathfrak{g}_{\delta} = \mathbb{R}^{3}$ is a root space of $\hat{\mathfrak{g}}(\alpha, \delta)$.
Suppose now that $\hat{\mathfrak{g}}(\alpha, \delta)$ were of type~$\mathrm{A}{2}$.
Then, by looking in the Data for simple Lie algebras \cite[Appendix C]{K}, it would be isomorphic (as a real Lie algebra) to one of $\mathfrak{sl}(3,\mathbb{C})$, $\mathfrak{sl}(3,\mathbb{R})$, or $\mathfrak{sl}(3,\mathbb{H})$.
None of these algebras have root spaces of dimension~3.
This is a contradiction.
\end{proof}

Consequently, we obtain:
\begin{prop}
\label{prrrrrroooooopppp1}
$\a\subset \h$.
\end{prop}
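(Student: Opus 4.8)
The plan is to reduce the statement to an orthogonality relation and then verify it root by root. Since $\a\subset\g_0$, the pairing condition forces $\g_0$ to be $\langle\cdot,\cdot\rangle$-orthogonal to every $\g_\beta$ with $\beta\neq\delta$ (the equality $0+\beta=\delta$ is the only way a vector of $\a$ can fail to be orthogonal to a root space, and $\g_0\perp\g_0$ because $0+0\neq\delta$). Hence an element $a\in\a$ lies in $\h=\ker\langle\cdot,\cdot\rangle$ as soon as $\langle a,\g_\delta\rangle=0$, and the whole Proposition reduces to proving $\a\perp\g_\delta$. By Proposition~\ref{pripou} I would split $\a=\hat{\a}\oplus\big(\a\cap Z(\g_{\Gamma_{\delta}})\big)$, where $\a\cap Z(\g_{\Gamma_{\delta}})=\operatorname{Ker}(\delta|_\a)$. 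The first summand is already settled: the proof of Proposition~\ref{prop5} gives $\hat{\a}\subset\h$, in particular $\hat{\a}\perp\g_\delta$. So everything comes down to $\operatorname{Ker}(\delta|_\a)\perp\g_\delta$.

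For this I would first record the root-theoretic shape of $\delta$. By Propositions~\ref{pppprrrr} and~\ref{porpro} the functional $\delta$ is the minimal root, and by Proposition~\ref{prop28} the space $\g_\delta\simeq\mathbb{R}^3$ is three-dimensional; comparing with the root multiplicities of $\sp(p,q)$ (long roots $\pm2e_i$ have multiplicity $3$, short roots $\pm e_i\pm e_j$ multiplicity $4$, and the $\pm e_i$ multiplicity $4(q-p)$), this forces $\delta$ to be a long root, so in canonical coordinates $\delta=-2e_1$. Now $\operatorname{Ker}(\delta|_\a)$ is spanned by the coroots $H_\gamma$ attached to the roots $\gamma$ orthogonal to $\delta$, that is, to the rank-$(p-1)$ subsystem of type $C_{p-1}$ (resp.\ $BC_{p-1}$) not involving $e_1$; if $p=1$ there is nothing left to prove. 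Fix such a $\gamma$, chosen positive, and write $H_\gamma=[x_\gamma,x_{-\gamma}]$ with $x_\gamma\in\g_\gamma\subset\g_+$. Since $\delta$ is a homomorphism vanishing on $\g_+$, Equation~\ref{Equa2} shows that $\operatorname{ad}_{x_\gamma}$ is $\langle\cdot,\cdot\rangle$-antisymmetric, whence for $u\in\g_\delta$ one has $\langle H_\gamma,u\rangle=-\langle x_{-\gamma},[x_\gamma,u]\rangle$. Because $\delta=-2e_1$ is long and $\gamma$ is orthogonal to it, $\gamma+\delta$ cannot be a root (any root with a coordinate $\pm2$ equals some $\pm2e_i$), so $[x_\gamma,u]\in\g_{\gamma+\delta}=0$ and thus $\langle H_\gamma,u\rangle=0$. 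Letting $\gamma$ run over the spanning family yields $\operatorname{Ker}(\delta|_\a)\perp\g_\delta$, and together with $\hat{\a}\perp\g_\delta$ this gives $\a\perp\g_\delta$, i.e.\ $\a\subset\h$.

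The main obstacle is concentrated in the root-combinatorial input rather than in the analytic identity: one must be sure that $\delta$ is a long minimal root, that the coroots $H_\gamma$ with $\gamma\perp\delta$ actually span $\operatorname{Ker}(\delta|_\a)$, and that $\gamma+\delta\notin\Delta$ for all such $\gamma$. Each of these rests on the explicit $C_p$/$BC_p$ description of the restricted root system of $\sp(p,q)$ together with the three-dimensionality of $\g_\delta$. It is worth noting that Lemma~\ref{prop289}, which rules out rank-two $A_2$ subsystems through $\delta$, expresses the same three-dimensionality constraint from the viewpoint of the rank-two subalgebras $\hat{\g}(\alpha,\delta)$, and so confirms that this structural picture is coherent; the antisymmetry of $\operatorname{ad}_{x_\gamma}$ for $x_\gamma\in\g_+$ is the one genuinely analytic ingredient and is immediate from the distortion equation.
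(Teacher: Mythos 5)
Your proof is correct, but it follows a different route from the paper's. The paper argues through the rank-two subalgebras $\hat{\g}(\alpha,\delta)$: having excluded type $\mathrm{A}_2$ (Lemma~\ref{prop289}), it exhibits in each remaining rank-two type a root $\beta\notin\mathbb{R}\delta$ with $\delta\pm\beta\notin\Delta$, so that the pairing condition gives $\g_{\pm\beta}\subset\h$, hence $H_\beta\in[\h,\h]\subset\h$; together with $\hat{\a}\subset\h$ this fills out the Cartan subalgebra of each $\hat{\g}(\alpha,\delta)$ and then all of $\a$. You instead make explicit the useful reduction that, by the pairing condition, an element of $\g_0$ lies in $\h$ as soon as it is orthogonal to $\g_\delta$; you then pin down $\delta=-2e_1$ from the multiplicity-$3$ constraint in the $C_p/BC_p$ root system of $\sp(p,q)$, observe that $\operatorname{Ker}(\delta|_\a)$ is spanned by coroots $H_\gamma$ with $\gamma$ supported away from $e_1$, and kill $\langle H_\gamma,\g_\delta\rangle$ via the antisymmetry of $\operatorname{ad}_{x_\gamma}$ (legitimate, since $\g_+\subseteq[\p,\p]$ forces $\delta(\g_+)=0$) together with $\gamma+\delta\notin\Delta$. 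Your combinatorial inputs all check out, and there is no circularity since Propositions~\ref{pppprrrr} and~\ref{porpro} precede this statement and do not use it. The trade-off: the paper's rank-two argument is more abstract and would survive in any algebra whose rank-two subsystems through $\delta$ avoid $\mathrm{A}_2$, whereas yours is shorter and more transparent once the identification $\g=\sp(p,q)$, $\delta=-2e_1$ is granted; note also that your antisymmetry computation could be replaced by the paper's simpler observation that $\delta\mp\gamma\notin\Delta$ already forces $\g_{\pm\gamma}\subset\h$ and hence $H_\gamma\in\h$ directly.
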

\begin{proof}
Let $\alpha$ be  a  root in $\Delta \setminus \mathbb{R}\delta$. By Lemma~\ref{prop289}, the rank-two semisimple subalgebra $\hat{\g}(\alpha, \delta)$ is not of type~$\mathrm{A}{2}$. By a simple examination, we can see that in each of the remaining rank-two types, there is a negative root $\beta\notin \mathbb{R}\delta$ such that $\delta-\beta$ is not a root. Indeed, if $\hat{\g}(\alpha, \delta)$ were of type $\mathrm{B}{2}$ or $\mathrm{D}{2}$, then $\delta=-e_{1}-e_{2}$ and one may choose $\beta=e_{2}-e_{1}$.  
If it were of type $\mathrm{G}{2}$, then $\delta=-2e_{3}+e_{1}+e_{2}$ and one may choose $\beta=e_{2}-e_{1}$.  
Finally, if it were of type $(\mathrm{BC})_{2}$, then $\delta=-2e_{1}$ and one may choose $\beta=-e_{2}$.

It follows that $\g_{\pm\beta}\subset\h$, and hence $[\g_{\beta},\g_{-\beta}]\subset \h$. In particular, $H_{\beta}\in \h$ , and therefore the entire Cartan subalgebra of $\hat{\g}(\alpha, \delta)$ lies in $\h$. Since this is true for every $\alpha$, we get that $\a\subset \h$. 
\end{proof}

\begin{prop} \label{p}
We have:
\begin{enumerate}
  \item If $p < q$, then:
  \[
  \mathfrak{g}/\mathfrak{h} \simeq  \mathfrak{g}_{-2e_1} \oplus \mathfrak{g}_{-e_1} \oplus \bigoplus_{i > 1} \mathfrak{g}_{-(e_1 \pm e_i)} \oplus \mathfrak{so}(3).
  \]
  
  \item If $p = q$, then:
  \[
  \mathfrak{g}/\mathfrak{h} \simeq  \mathfrak{g}_{-2e_1} \oplus \bigoplus_{i > 1} \mathfrak{g}_{-(e_1 \pm e_i)} \oplus \mathfrak{so}(3).
  \]
\end{enumerate}
\end{prop}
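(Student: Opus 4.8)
The plan is to read off $\g/\h$ directly from the root grading. First I would identify $\delta$. Since $\g_{+}\subset\h$ by Proposition~\ref{porpro}, Proposition~\ref{pppprrrr} shows that $\delta$ is the lowest root; for the canonical order on the root system of $\sp(p,q)$ this is $\delta=-2e_1$, the negative of the highest root $2e_1$. This is coherent with Proposition~\ref{prop28}, since $\g_{\delta}\simeq\R^{3}$ and the only root spaces of $\sp(p,q)$ of real dimension $3$ are those of the long roots $\pm 2e_i$.

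Next, because $\h$ is the kernel of $\langle\cdot,\cdot\rangle$ and the pairing condition forces $\g_{\alpha}\perp\g_{\beta}$ unless $\alpha+\beta=\delta$, the subspace $\h$ is graded, $\h=\bigoplus_{\gamma}(\h\cap\g_{\gamma})$, and $\g_{\gamma}\subset\h$ whenever $\delta-\gamma\notin\Delta\cup\{0\}$. As $\g_{+}\subset\h$ and $\a\subset\h$ (Proposition~\ref{prrrrrroooooopppp1}), the only root spaces that can contribute to $\g/\h$ are those indexed by nonzero roots $\gamma$ with $\delta-\gamma\in\Delta\cup\{0\}$; a direct inspection of the $C_p$ and $(BC)_p$ systems shows these are exactly $\gamma=-2e_1$ (paired with $\g_{0}$), $\gamma=-(e_1\pm e_i)$ for $i>1$ (paired with one another), and, only when $p<q$, $\gamma=-e_1$ (self-paired). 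The contribution of $\g_{0}$ is governed by~\eqref{equa93}: $\g_{0}/(\g_{0}\cap\h)\simeq\hat{\m}\simeq\so(3)$.

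The heart of the proof is to show that each of these root spaces survives in full, i.e. $\h\cap\g_{\gamma}=0$. I would use that $\hat{\m}$ normalizes $\h$: being semisimple it has trivial distortion, so it lies in the conformal isotropy $\p$ and preserves $\ker\langle\cdot,\cdot\rangle=\h$; hence each $\h\cap\g_{\gamma}$ is a module over $\hat{\m}$, and, for $\gamma=-e_1$, over the compact factor of $\m$ attached to the first coordinate. Identifying the root spaces with quaternionic data, $\hat{\m}\simeq\sp(1)$ acts on each $\g_{-(e_1\pm e_i)}\simeq\H$ by quaternion multiplication, which is $\R$-irreducible, so $\h\cap\g_{-(e_1\pm e_i)}$ is either $0$ or everything. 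It cannot be everything: if $\g_{-(e_1-e_i)}\subset\h$ then, by the pairing condition, its partner $\g_{-(e_1+e_i)}$ is orthogonal to all of $\g$ and hence also lies in $\h$; the bracket $[\g_{-(e_1-e_i)},\g_{-(e_1+e_i)}]$ would then be contained in $\h$, while by \cite{Kraljevic} it is a nonzero subspace of $\g_{-2e_1}=\g_{\delta}$, contradicting $\g_{\delta}\cap\h=0$. Thus $\h\cap\g_{-(e_1\pm e_i)}=0$. When $p<q$ the same scheme applies to $\g_{-e_1}\simeq\H^{q-p}$, now irreducible under $\hat{\m}\oplus\sp(q-p)\subset\m$, with $[\g_{-e_1},\g_{-e_1}]$ a nonzero subspace of $\g_{\delta}$ again excluding $\g_{-e_1}\subset\h$.

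Collecting the surviving root spaces together with the $\so(3)$ coming from $\g_{0}$ then yields the two displayed decompositions, the term $\g_{-e_1}$ being present precisely in the $(BC)_p$ case $p<q$ and absent in the $C_p$ case $p=q$. The main obstacle is exactly the full-survival step: the pairing and grading alone determine $\g/\h$ only up to the radicals $\h\cap\g_{\gamma}$, and collapsing them to $0$ requires both the irreducibility of the root spaces as modules over the compact part of $\m$ and the non-vanishing of the relevant brackets into $\g_{\delta}$. The most delicate point to justify rigorously is that $\hat{\m}$ (and the factor $\sp(q-p)$) genuinely normalize $\h$; I would deduce this from their triviality of distortion as semisimple subalgebras of the conformal isotropy, combined with the grading of $\h$.
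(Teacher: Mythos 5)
Your overall skeleton matches the paper's: identify $\delta=-2e_1$, use the pairing condition to place all root spaces $\mathfrak{g}_{\gamma}$ with $\delta-\gamma\notin\Delta\cup\{0\}$ inside $\mathfrak{h}$, invoke Propositions~\ref{porpro} and~\ref{prrrrrroooooopppp1} and Equation~\eqref{equa93}, and then show that the surviving root spaces meet $\mathfrak{h}$ trivially. Your observation that $\mathfrak{h}=\ker\langle\cdot,\cdot\rangle$ is graded (each component of an element of $\mathfrak{h}$ pairs with only one root space, hence lies in $\mathfrak{h}$ itself) is correct and would in fact shortcut the paper's lengthy ``Fourth'' step, which establishes the absence of diagonal elements by hand using brackets with carefully chosen elements of $\mathfrak{a}$.

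However, there is a genuine gap in your treatment of the crucial ``full survival'' step. You reduce $\mathfrak{h}\cap\mathfrak{g}_{-(e_1\pm e_i)}\in\{0,\mathfrak{g}_{-(e_1\pm e_i)}\}$ to the claim that $\hat{\mathfrak{m}}$ (and $\mathfrak{sp}(q-p)\subset\mathfrak{m}$) normalizes $\mathfrak{h}$, and you justify this by saying that $\hat{\mathfrak{m}}$, being semisimple, ``has trivial distortion, so it lies in the conformal isotropy $\mathfrak{p}$.'' This is circular: the distortion $\delta$ is a homomorphism defined \emph{on} $\mathfrak{p}$, so its vanishing on a subalgebra presupposes membership in $\mathfrak{p}$ rather than implying it. At this stage of the argument nothing guarantees $\mathfrak{m}\subset\mathfrak{p}$ --- the paper only knows $\mathfrak{a}\oplus\mathfrak{g}_{+}\subset\mathfrak{p}$ and $\mathfrak{h}\subset\mathfrak{p}$, and indeed it is careful throughout (see the manipulations with $v+m_v\in\mathfrak{h}\cap\mathfrak{g}_0$ in Subsection~\ref{subsection5.2} and in Proposition~\ref{porpro}) to avoid assuming that $\mathfrak{m}$ preserves anything. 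In the target model $\hat{\mathfrak{m}}\cong\mathfrak{so}(3)$ moves the base point inside the fiber $\pi^{-1}(z\mathbb{H})$, so whether it fixes the conformal class of $\langle\cdot,\cdot\rangle$ is exactly the kind of statement that needs proof, not assumption. The paper sidesteps this entirely: given $0\neq x\in\mathfrak{h}\cap\mathfrak{g}_{-(e_1\pm e_j)}$, the Lemma of \cite{Kraljevic} gives $\mathfrak{g}_{-(e_1+e_k)}=[x,\mathfrak{g}_{-(e_k\mp e_j)}]\subset[\mathfrak{h},\mathfrak{h}]\subset\mathfrak{h}$ for $k\neq 1,j$ (and $\mathfrak{g}_{-e_1-e_2}=[x,\mathfrak{g}_{-e_2}]$ for the root $-e_1$), contradicting the already-established fact that these root spaces are not contained in $\mathfrak{h}$. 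This bracket-generation argument needs neither the normalization of $\mathfrak{h}$ by $\hat{\mathfrak{m}}$ nor the $\mathbb{R}$-irreducibility of the root spaces as $\mathfrak{sp}(1)$-modules (a second fact you assert without proof). You should either adopt the paper's route for this step or supply an actual proof that $\hat{\mathfrak{m}}\subset\mathfrak{p}$.
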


\begin{proof}
Without loss of generality, we may assume that \( p < q \). The proof in the case $p=q$ is similar. Note that by Proposition~\ref{prop28}, we have \( \delta = -2e_1 \), and hence 
\begin{equation}
\mathfrak{g}_{-2e_1} \cap \mathfrak{h} 
= \mathfrak{g}_{\delta} \cap \mathfrak{h} 
= 0 .
\end{equation}

First, on the one hand, for every \( 1 < i < j \), \( \delta + (e_i \pm e_j) = -2e_1 + e_i \pm e_j \), \( \delta + e_i = -2e_1 + e_i \), and \( \delta + 2e_i = -2e_1 + 2e_i \) are not roots. Hence, by the pairing condition, we conclude that\begin{equation}\label{equa5}
\forall\, 1 < i < j, \quad 
\mathfrak{g}_{-(e_i \pm e_j)}, \; 
\mathfrak{g}_{-e_i}, \; 
\mathfrak{g}_{-2e_i} \;\subset\; \mathfrak{h}.
\end{equation}\\
On the other hand, by Propositions~\ref{porpro}, we have
 \begin{equation}\label{equa7}
\forall\, 1 \leq i < j \leq p, \quad 
\mathfrak{g}_{e_i \pm e_j}, \;
\mathfrak{g}_{e_i}, \;
\mathfrak{g}_{2e_i}
\;\subset\; \mathfrak{h}.
\end{equation}

Second, for all \( j > 1 \), we have \( \mathfrak{g}_{-(e_1 \pm e_j)} \not\subset \mathfrak{h} \). Indeed, suppose for contradiction that there exists \( j > 1 \) such that \( \mathfrak{g}_{-(e_1 + e_j)} \subset \mathfrak{h} \) or \( \mathfrak{g}_{-(e_1 - e_j)} \subset \mathfrak{h} \). Since the roots \( -(e_1 + e_j) \) and \( -(e_1 - e_j) \) are paired, it follows that both are contained in \( \mathfrak{h} \). Then, by \cite{Kraljevic}, we would have
$
\mathfrak{g}_{-2e_1} = [\mathfrak{g}_{-(e_1 + e_j)}, \mathfrak{g}_{-(e_1 - e_j)}] \subset \mathfrak{h},
$
which contradicts the fact that \( \mathfrak{g}_{-2e_1} \cap \mathfrak{h} = 0 \).\\
Similarly, \( \mathfrak{g}_{-e_1} \not\subset \mathfrak{h} \). Indeed, if \( \mathfrak{g}_{-e_1} \subset \mathfrak{h} \), then by \cite{Kraljevic}, we would have
$
\mathfrak{g}_{-2e_1} = [\mathfrak{g}_{-e_1}, \mathfrak{g}_{-e_1}] \subset \mathfrak{h},
$
again yielding a contradiction.

Third, we claim that 
\begin{equation}
\label{equa8}
\forall\, j > 1, \quad 
\mathfrak{g}_{-(e_1 \pm e_j)} \cap \mathfrak{h} = 0 .
\end{equation}
Suppose, to the contrary, that there exists \( x \in \mathfrak{g}_{-(e_1 \pm e_j)} \cap \mathfrak{h} \) with \( x \neq 0 \). Then, by the Lemma of \cite{Kraljevic}, for any \( k \neq 1, j \), we have:
\[
\mathfrak{g}_{-(e_1 + e_k)} = [x, \mathfrak{g}_{-(e_k \mp e_j)}] \subset [\mathfrak{h}, \mathfrak{h}] \subset \mathfrak{h},
\]
which contradicts the previous conclusion that \( \mathfrak{g}_{-(e_1 + e_k)} \not\subset \mathfrak{h} \). \\
A similar argument shows that \begin{equation}
\label{equa9}
\mathfrak{g}_{-e_1} \cap \mathfrak{h} = 0 .
\end{equation}
Indeed, suppose there exists \( x \in \mathfrak{g}_{-e_1} \cap \mathfrak{h} \) with \( x \neq 0 \). Then, by the Lemma of \cite{Kraljevic}, we obtain
\[
\mathfrak{g}_{-e_1 - e_2} = [x, \mathfrak{g}_{-e_2}] \subset [\mathfrak{h}, \mathfrak{h}] \subset \mathfrak{h},
\]
which contradicts again our previous conclusion that \( \mathfrak{g}_{-e_1 - e_2} \not\subset \mathfrak{h} \).\\
Finally, by Proposition~\ref{prop28}, we have
\begin{equation}
\label{equa10}
\hat{\mathfrak{m}} \cap \mathfrak{h} 
= \mathfrak{so}(3) \cap \mathfrak{h} 
= 0 .
\end{equation}

Fourth, we claim that \begin{equation}
\label{equa11}
\left( 
\mathfrak{g}_{-2e_1} 
\oplus \mathfrak{g}_{-e_1} 
\oplus_{j>1} \mathfrak{g}_{-(e_1 \pm e_j)} 
\oplus \hat{\mathfrak{m}}
\right) 
\cap \mathfrak{h} = 0 .
\end{equation}
By contradiction, let $0\neq x \in\left( \mathfrak{g}_{-2e_1}\oplus \mathfrak{g}_{-e_1}\oplus_{j>1} \mathfrak{g}_{-(e_1 \pm e_j)}\oplus\hat{\mathfrak{m}}\right)   \cap \mathfrak{h}$. Assume first, that the projection of $x$ on $\mathfrak{g}_{-2e_1}$ is non nul. On the one hand, by the Lemma of \cite{Kraljevic}, the projection of $[\mathfrak{a},[x,\mathfrak{g}_{e_1}]]$ on $\mathfrak{g}_{-e_1}$ is onto. On the other hand, by Proposition~\ref{prrrrrroooooopppp1}, $[\mathfrak{a},[x,\mathfrak{g}_{e_1}]]\subset \mathfrak{h} $. Now, since for every $j>1$, $\mathfrak{g}_{\pm e_j}\subset \mathfrak{h}$, we obtain that $\mathfrak{g}_{-e_1}\subset \mathfrak{h}$ which clearly contradicts Equation~\ref{equa9}.\\
Let $\alpha$ be a root in $\lbrace e_{1}, e_{1}+ e_{j}, j>1\rbrace\cup \lbrace  e_{1}- e_{j}, j>1\rbrace$ such that the projection of $x$ on $\mathfrak{g}_{-\alpha}$ is non nul. Note that each of the two family  $\lbrace e_{1}, e_{1}+ e_{j}, j>1\rbrace$ and $\lbrace  e_{1}- e_{j}, j>1\rbrace$ is linearly independent. Thus, there exists two non nuls elements
\[
 a_{1} \in 
\bigcap_{\beta \in \{ e_{1}, \, e_{1}+ e_{j} \mid j>1 \} \setminus \{\alpha\}}
\operatorname{Ker}(\beta)
\]
and
\[
 a_{2} \in 
\bigcap_{\beta \in \{ e_{1}- e_{j} \mid j>1 \} \setminus \{\alpha\}}
\operatorname{Ker}(\beta).
\]
Therefore, again by Proposition~\ref{prrrrrroooooopppp1}, $0\neq [a_{2},[a_{1},x]]\in \mathfrak{h}\cap \mathfrak{g}_{-\alpha}$ which contradicts Equation~\ref{equa8} (or Equation~\ref{equa9}).\\
Now, we are left with $x\in \hat{\mathfrak{m}}\cap \mathfrak{h}$ which contradicts Equation~\ref{equa10}.

In summary, combining Equations~\eqref{equa93}, \eqref{equa5}, \eqref{equa7}, and \eqref{equa11} give us:
\[
\g/\mathfrak{h} \simeq \mathfrak{g}_{-2e_1} \oplus \mathfrak{g}_{-e_1} \oplus \bigoplus_{i > 1} \mathfrak{g}_{-(e_1 \pm e_i)} \oplus \mathfrak{so}(3).
\]

\end{proof}

Next, we will show that the conformal structure is unique:
\begin{prop}
\label{prop280}
If the conformal structure $\langle .,.\rangle$ exists then it is unique.
\end{prop}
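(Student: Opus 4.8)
The plan is to show that the space of admissible forms is one-dimensional. Since $\langle\cdot,\cdot\rangle$ has kernel $\mathfrak h$, it is determined by the non-degenerate form $\bar g$ it induces on $V=\mathfrak g/\mathfrak h$, and by Proposition~\ref{p} together with the pairing condition $\bar g$ splits into blocks: $\mathfrak g_{-2e_1}$ is paired with $\hat{\mathfrak m}\simeq\mathfrak{so}(3)$, the space $\mathfrak g_{-e_1}$ is paired with itself, and $\mathfrak g_{-(e_1+e_i)}$ is paired with $\mathfrak g_{-(e_1-e_i)}$ for each $i>1$ (when $p=q$ the $\mathfrak g_{-e_1}$ block is absent). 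Thus it suffices to fix the overall scale on one block and to show that every other block is then rigidly determined.

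The first step is to normalise on the Möbius block $\mathfrak g_{-2e_1}\leftrightarrow\hat{\mathfrak m}$. This pairing lies entirely inside $\hat{\mathfrak g}(\delta)\simeq\mathfrak{so}(1,4)$, and the triple $(\mathfrak g_{\Gamma_{\delta}},\mathfrak h\cap\mathfrak g_{\Gamma_{\delta}},\langle\cdot,\cdot\rangle_{\mathfrak g_{\Gamma_{\delta}}})$ satisfies the heredity principle, i.e. it is exactly the Möbius situation classified in \cite{BDRZ1}; hence its conformal structure is unique up to a positive scalar $c$. (At bottom this is Schur's lemma: by Proposition~\ref{prop28}, $\mathfrak g_{-2e_1}\simeq\mathbb{R}^{3}$ and $\hat{\mathfrak m}\simeq\mathfrak{so}(3)$ are dual copies of the standard three-dimensional $\mathfrak{so}(3)$-module, so an invariant pairing between them is unique up to scalar.) We fix $c$ once and for all.

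The second step is propagation. Since $\mathfrak g_{+}\subset\mathfrak p$ and $\delta$ vanishes on $\mathfrak g_{+}$ (it lies in $[\mathfrak p,\mathfrak p]$), Equation~\ref{Equa2} yields, for every $p\in\mathfrak g_{+}$, the infinitesimal invariance
\[
\langle [p,u],v\rangle+\langle u,[p,v]\rangle=0 .
\]
Choosing $p$ in suitable positive root spaces and invoking the bracket relations of \cite{Kraljevic}, this identity expresses the form on a given block in terms of its restriction to an adjacent block: $[\mathfrak g_{e_1},\mathfrak g_{-2e_1}]\subseteq\mathfrak g_{-e_1}$ links $B_0$ to $\mathfrak g_{-e_1}$; $[\mathfrak g_{e_i},\mathfrak g_{-e_1}]\subseteq\mathfrak g_{-(e_1-e_i)}$ and $[\mathfrak g_{e_i},\mathfrak g_{-(e_1+e_i)}]\subseteq\mathfrak g_{-e_1}$ link $\mathfrak g_{-e_1}$ to each $\mathfrak g_{-(e_1\pm e_i)}$; and $[\mathfrak g_{e_i\pm e_j},\cdot]$ links the blocks $\mathfrak g_{-(e_1\pm e_i)}$ among themselves (when $p=q$, where $\mathfrak g_{-e_1}=0$, one connects $B_0$ directly to $\mathfrak g_{-(e_1-e_i)}$ via $[\mathfrak g_{e_1+e_i},\mathfrak g_{-2e_1}]$). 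Each such bracket is $\mathfrak m$-equivariant, so its image is a nonzero $\mathfrak m$-submodule of the target root space; as these root spaces are irreducible $\mathfrak m$-modules, the brackets are surjective, and this surjectivity is precisely what guarantees that the invariance identity pins down the \emph{entire} block form rather than merely an extra scalar on it.

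Putting the steps together: given two admissible forms, rescale so that they coincide on $B_0$; the invariance identity and the surjectivity just established then force them to coincide on $\mathfrak g_{-e_1}$, hence on all $\mathfrak g_{-(e_1\pm e_i)}$, hence on $V$, so $\langle\cdot,\cdot\rangle$ is unique up to a positive scalar. I expect the main difficulty to be the propagation bookkeeping—verifying that the listed brackets genuinely reach every block and that the determination is consistent along different connecting paths—together with confirming the $\mathfrak m$-irreducibility of the relevant root spaces on which the surjectivity rests.
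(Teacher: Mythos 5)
Your proposal is correct and follows essentially the same strategy as the paper: both reduce the form on each paired block of $\mathfrak{g}/\mathfrak{h}$ to its restriction to the M\"obius subalgebra $\mathfrak{g}_{\Gamma_\delta}$ (unique by \cite[Proposition~4.2]{BDRZ1}) via the infinitesimal invariance identity of Equation~\ref{Equa2} applied to nilpotent $p\in\mathfrak{g}_{+}$ together with the surjectivity of the brackets $[\mathfrak{g}_{\alpha},\mathfrak{g}_{\beta}]=\mathfrak{g}_{\alpha+\beta}$ from \cite{Kraljevic}. The only (harmless) difference is that the paper collapses every block pairing to $\mathfrak{g}_{0}\times\mathfrak{g}_{\delta}$ in a single step by writing $y=\operatorname{ad}_p(z)$ with $z\in\mathfrak{g}_{\delta}$ fixed, whereas you propagate through a chain of adjacent blocks, which makes your consistency worry moot since only uniqueness, not existence, is at stake.
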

\begin{proof}
By Proposition~\ref{p}, we have that $\mathfrak{g}/\mathfrak{h}\simeq \mathfrak{g}_{-2e_{1}}\oplus\mathfrak{g}_{-e_{1}} \bigoplus_{j>1} \mathfrak{g}_{-(e_{1}\pm e_{j})}\oplus \mathfrak{so}(3)$. Note that:
\begin{enumerate}
\item $\mathfrak{g}_{-2e_{1}}$ is paired with $\mathfrak{so}(3)$;
\item $\mathfrak{g}_{-e_{1}}$ is paired with itself; and
\item for every $j>1$, $\mathfrak{g}_{-(e_{1}+ e_{j})}$ is paired with $\mathfrak{g}_{-(e_{1}-e_{j})}$.
\end{enumerate}
Let $\alpha$ be a negative root such that $\mathfrak{g}_{-\alpha} \cap \mathfrak{h} = 0$, and fix an element $0 \neq z \in \mathfrak{g}_\delta$. Then by the Lemma of \cite{Kraljevic}, we have
\[
[\mathfrak{g}_{\alpha}, z] = \mathfrak{g}_{\delta + \alpha}.
\]
Hence, every $y \in \mathfrak{g}_{\delta + \alpha}$ can be written as $y = \operatorname{ad}_p(z)$ for some $p \in \mathfrak{g}_{\alpha}$. Since $p$ acts isometrically, we get by applying Equation~\ref{Equa2} to $p$, $x\in \mathfrak{g}_{-\alpha}$, and $z$:
\[
\langle x, y \rangle = -\langle \operatorname{ad}_p(x), z \rangle.
\]
This implies that the conformal structure $\langle \cdot, \cdot \rangle$ is entirely determined by its restriction to $\mathfrak{g}_{\Gamma_\delta}$, which is  unique by \cite[Proposition~4.2]{BDRZ1}.

\end{proof}

Now, in order to complete the proof of our main theorem, we need to show that we already have an example of such a situation. To do so, let us return to our example of the action of \(\Sp(p,q)\) on \(\Ein^{4p-1, 4q-1}\). In this case, we have that $\mathfrak{p}^{\mathbb{H}}$ is a vector-space direct sum $\mathfrak{p}^{\mathbb{H}}=\mathfrak{p}^{\mathbb{R}}\oplus \so(3)$ and $
\mathfrak{a} \oplus \mathfrak{g}_{+} \subseteq \mathfrak{p}^{\mathbb{R}}.
$ By Proposition~\ref{proppppppppppppppp}, $\m\nsubseteq \mathfrak{p}^{\mathbb{R}}.$ Actually we have more: $\hat{\m} \nsubseteq \h$. If not, then $\mathfrak{g}_{-2e_1} \cap \mathfrak{p}^{\mathbb{R}} = \mathfrak{g}_{\delta} \cap \mathfrak{p}^{\mathbb{R}} \neq 0
$. But, by \cite[Lemma]{Kraljevic}, we have that for every $x\in \mathfrak{g}_{-2e_1} \cap \mathfrak{p}^{\mathbb{R}}$, $
\mathfrak{g}_{-e_{1}}=[\mathfrak{g}_{e_{1}}, x]\subset [\mathfrak{p}^{\mathbb{R}}, \mathfrak{p}^{\mathbb{R}}]\subset \mathfrak{p}^{\mathbb{R}} 
$. Thus $\mathfrak{g}_{-2e_{1}}=\mathfrak{g}_{\delta}\subset \mathfrak{p}^{\mathbb{R}} $, which contradicts the fact that $\m\nsubseteq \mathfrak{p}^{\mathbb{R}}.$

Therefore, combining Proposition~\ref{prop28} together with the example given in section~\ref{secexample} shows that:
\begin{prop}
If $\hat{\m} \nsubseteq \h$, then $M$ is, up to finite cover, conformally equivalent to $\Ein^{4p-1, 4q-1}$. In particular, $M$ is conformally flat.
\end{prop}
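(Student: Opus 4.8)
The plan is to certify that the model action of $\Sp(p,q)$ on $\Ein^{4p-1,4q-1}$ from Section~\ref{secexample} realizes exactly the algebraic data $(\g,\h,\langle .,.\rangle)$ produced in the present (fourth) case, and then to transport conformal flatness from the model to $M$.

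First I would verify that the model itself belongs to the fourth case, i.e. that $\hat{\m}\nsubseteq \mathfrak{p}^{\mathbb{R}}$, where $\mathfrak{p}^{\mathbb{R}}$ (the kernel of the model's Gauss form) plays the role of the isotropy algebra $\h$. This is the computation carried out just before the statement: since $\a\oplus\g_{+}\subseteq \mathfrak{p}^{\mathbb{R}}$ whereas $\m\nsubseteq \mathfrak{p}^{\mathbb{R}}$ by Proposition~\ref{proppppppppppppppp}, a bracket argument based on the lemma of~\cite{Kraljevic} (computing $\g_{-e_1}=[\g_{e_1},\,\g_{-2e_1}\cap\mathfrak{p}^{\mathbb{R}}]$) forbids $\hat{\m}\subseteq\mathfrak{p}^{\mathbb{R}}$. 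Hence the model triple satisfies the fourth-case hypothesis, so every structural conclusion obtained in this section applies to it verbatim.

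Second, I would use the rigidity of $\h$ established in this section to identify the two isotropy algebras. By Propositions~\ref{porpro} and~\ref{prrrrrroooooopppp1} together with Proposition~\ref{p}, in the fourth case $\h$ is forced: it contains $\a$, all of $\g_{+}$, and the root spaces of Equations~\eqref{equa5} and~\eqref{equa7}, while meeting $\g_{-2e_1}$, $\g_{-e_1}$, $\g_{-(e_1\pm e_j)}$ $(j>1)$ and $\hat{\m}\simeq\so(3)$ trivially, with $\g_{0}/(\g_{0}\cap\h)\simeq\hat{\m}$. Applying these same constraints to the model's kernel $\mathfrak{p}^{\mathbb{R}}$ shows that $\h$ and $\mathfrak{p}^{\mathbb{R}}$ consist of the same root spaces and the same part of $\g_{0}$, hence coincide as subalgebras of $\sp(p,q)$. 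By Proposition~\ref{prop280} the degenerate form is unique once $(\g,\h)$ is fixed, so $\langle .,.\rangle$ agrees, up to a conformal factor, with the form carried by the model.

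Finally, $G/H$ and $\Sp(p,q)/\operatorname{P}^{\mathbb{R}}\simeq\Ein^{4p-1,4q-1}$ are then built from identical Lie-algebraic and conformal data, so they are locally conformally isomorphic; being both compact they admit a common finite cover, giving the conformal equivalence up to finite cover, and since $\Ein^{4p-1,4q-1}$ is conformally flat, so is $M$. The step I expect to be the main obstacle is precisely this passage from the infinitesimal identification $\h=\mathfrak{p}^{\mathbb{R}}$ to the global statement: at the Lie-algebra level everything is forced, but to compare the actual homogeneous spaces one must control the discrete/component data of the isotropy subgroups, which is where compactness of $M$ and of $\Ein^{4p-1,4q-1}$, together with the modification reductions of Lemmas~\ref{Lemma1} and~\ref{simplenoncompact}, enter.
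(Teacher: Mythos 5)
Your proposal is correct and follows essentially the same route as the paper: first certify that the model action of $\Sp(p,q)$ on $\Ein^{4p-1,4q-1}$ falls into the fourth case via the bracket argument showing $\hat{\m}\nsubseteq\mathfrak{p}^{\mathbb{R}}$, then invoke the rigidity of the pair $(\h,\langle .,.\rangle)$ from Propositions~\ref{p} and~\ref{prop280} to identify the algebraic data of $M$ with that of the model. Your closing caveat about passing from the infinitesimal identification to the global conformal equivalence is a fair observation, but the paper is equally terse there and absorbs the discrete/component ambiguity into the ``up to finite cover'' phrasing.
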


\bibliographystyle{plain}

\bibliography{bibliosemisimple}

@article{montgomery1950simply,
 author={Montgomery, Deane},
  title={Simply connected homogeneous spaces},
  journal={Proceedings of the American Mathematical Society},
  volume={1},
  number={4},
  pages={467--469},
  year={1950},
  publisher={JSTOR}
}

@article{Deffaf2025,
  author    = {Deffaf, Mohamed and Belraouti, Mehdi and Raffed, Yazid and Zeghib, Abdelghani},
  title     = {On Homogeneous Holomorphic Conformal Structures},
  journal   = {Transformation Groups},
  year      = {2025},
  doi       = {10.1007/s00031-025-09934-y},
  url       = {https://doi.org/10.1007/s00031-025-09934-y},
  abstract  = {We study compact complex manifolds M admitting a conformal holomorphic Riemannian structure, invariant under the action of a complex semisimple Lie group G. We classify all such manifolds on which G acts transitively and essentially. In particular, we show that they are conformally flat.},
  issn      = {1531-586X}
}

@article{FM,
 author = {Frances, Charles and Melnick, Karin},
 title = {Normal forms for conformal vector fields},
 fjournal = {Bulletin de la Soci{\'e}t{\'e} Math{\'e}matique de France},
 journal = {Bull. Soc. Math. Fr.},
 issn = {0037-9484},
 volume = {141},
 number = {3},
 pages = {377--421},
 year = {2013},
 language = {English},
 doi = {10.24033/bsmf.2652},
 keywords = {53C50,58K50},
 url = {smf4.emath.fr/en/Publications/Bulletin/141/html/smf_bull_141_377-421.php},
 zbMATH = {6266748},
 Zbl = {1287.53063}
}

@article{Gromov,
author={Giuseppina D' Ambra and Mikhail Gromov},
  title={Lectures on transformation groups: geometry and dynamics},
 journal={Surveys in differential geometry},
  year={1990},
  volume={1},
  pages={19-111}
}

@article{Kraljevic,
 author = {Kraljevi{\'c}, Hrvoje},
 title = {A note on the root subspaces of real semisimple {Lie} algebras},
 fjournal = {Mathematical Communications},
 journal = {Math. Commun.},
 issn = {1331-0623},
 volume = {9},
 number = {1},
 pages = {63--65},
 year = {2004},
 language = {English},
 keywords = {17B20},
 url = {hrcak.srce.hr/file/1431},
 zbMATH = {2109315},
 Zbl = {1169.17300}
}

@article {Bader,
    AUTHOR = {Bader, Uri and Nevo, Amos},
     TITLE = {Conformal actions of simple {L}ie groups on compact
              pseudo-{R}iemannian manifolds},
   JOURNAL = {J. Differential Geom.},
  FJOURNAL = {Journal of Differential Geometry},
    VOLUME = {60},
      YEAR = {2002},
    NUMBER = {3},
     PAGES = {355--387},
      ISSN = {0022-040X},
   MRCLASS = {53C50 (57S20)},
  MRNUMBER = {1950171},
MRREVIEWER = {Dave Witte Morris},
       URL = {http://projecteuclid.org/euclid.jdg/1090351122},
}

@article {Francesun,
    AUTHOR = {Frances, Charles},
     TITLE = {About pseudo-{R}iemannian {L}ichnerowicz conjecture},
   JOURNAL = {Transform. Groups},
  FJOURNAL = {Transformation Groups},
    VOLUME = {20},
      YEAR = {2015},
    NUMBER = {4},
     PAGES = {1015--1022},
      ISSN = {1083-4362},
   MRCLASS = {53C50 (53A30)},
  MRNUMBER = {3416437},
MRREVIEWER = {Javier Lafuente-L\'{o}pez},
       DOI = {10.1007/s00031-015-9317-x},
       URL = {https://doi.org/10.1007/s00031-015-9317-x},
}

@article {Francestrois,
    AUTHOR = {Frances, Charles},
     TITLE = {Sur les vari\'{e}t\'{e}s lorentziennes dont le groupe conforme est
              essentiel},
   JOURNAL = {Math. Ann.},
  FJOURNAL = {Mathematische Annalen},
    VOLUME = {332},
      YEAR = {2005},
    NUMBER = {1},
     PAGES = {103--119},
      ISSN = {0025-5831},
   MRCLASS = {53C50},
  MRNUMBER = {2139253},
MRREVIEWER = {Pierre Mounoud},
       DOI = {10.1007/s00208-004-0619-x},
       URL = {https://doi.org/10.1007/s00208-004-0619-x},
}

@misc{frances2,
      title={The Lorentzian  { L}ichnerowicz Conjecture for real-analytic, three-dimensional manifolds}, 
      author={Charles Frances and Karin Melnick},
      year={2021},
      eprint={2108.07215},
      archivePrefix={arXiv},
      primaryClass={math.DG}
}

@article {Francesquatre,
    AUTHOR = {Frances, Charles and Zeghib, Abdelghani},
     TITLE = {Some remarks on conformal pseudo-{R}iemannian actions of
              simple {L}ie groups},
   JOURNAL = {Math. Res. Lett.},
  FJOURNAL = {Mathematical Research Letters},
    VOLUME = {12},
      YEAR = {2005},
    NUMBER = {1},
     PAGES = {49--56},
      ISSN = {1073-2780},
   MRCLASS = {53C50 (22F30)},
  MRNUMBER = {2122729},
MRREVIEWER = {Dave Witte Morris},
       DOI = {10.4310/MRL.2005.v12.n1.a5},
       URL = {https://doi.org/10.4310/MRL.2005.v12.n1.a5},
}

@book {J,
    AUTHOR = {Jacobson, Nathan},
     TITLE = {Lie algebras},
    SERIES = {Interscience Tracts in Pure and Applied Mathematics, No. 10},
 PUBLISHER = {Interscience Publishers (a division of John Wiley \& Sons,
              Inc.), New York-London},
      YEAR = {1962},
     PAGES = {ix+331},
   MRCLASS = {17.30 (22.80)},
  MRNUMBER = {0143793},
MRREVIEWER = {Rimhak Ree},
}

@book {K,
    AUTHOR = {Knapp, Anthony W.},
     TITLE = {Lie groups beyond an introduction},
    SERIES = {Progress in Mathematics},
    VOLUME = {140},
 PUBLISHER = {Birkh\"{a}user Boston, Inc., Boston, MA},
      YEAR = {1996},
     PAGES = {xvi+604},
      ISBN = {0-8176-3926-8},
   MRCLASS = {22-01 (17-01)},
  MRNUMBER = {1399083},
MRREVIEWER = {H. de Vries},
       DOI = {10.1007/978-1-4757-2453-0},
       URL = {https://doi.org/10.1007/978-1-4757-2453-0},
}

@Article{Ferrand,
 Author = {Lelong-Ferrand, J.},
 Title = {Transformations conformes et quasiconformes des vari{\'e}t{\'e}s riemanniennes; application {\`a} la d{\'e}monstration d'une conjecture de {A}. {Lichnerowicz}},
 FJournal = {Comptes Rendus Hebdomadaires des S{\'e}ances de l'Acad{\'e}mie des Sciences, S{\'e}rie A},
 Journal = {C. R. Acad. Sci., Paris, S{\'e}r. A},
 ISSN = {0366-6034},
 Volume = {269},
 Pages = {583--586},	
 Year = {1969},
 Language = {French},
 zbMATH = {3318850},
 Zbl = {0201.09701}
}

@article{Obata,
author = {Morio Obata},
title = {{The conjectures on conformal transformations of Riemannian manifolds}},
volume = {6},
journal = {Journal of Differential Geometry},
number = {2},
publisher = {Lehigh University},
pages = {247 -- 258},
year = {1971},
doi = {10.4310/jdg/1214430407},
URL = {https://doi.org/10.4310/jdg/1214430407}
}

@article {Pecastaingun,
    AUTHOR = {Pecastaing, Vincent},
     TITLE = {Conformal actions of real-rank 1 simple {L}ie groups on
              pseudo-{R}iemannian manifolds},
   JOURNAL = {Transform. Groups},
  FJOURNAL = {Transformation Groups},
    VOLUME = {24},
      YEAR = {2019},
    NUMBER = {4},
     PAGES = {1213--1239},
      ISSN = {1083-4362},
   MRCLASS = {53C30 (22E46 53C50)},
  MRNUMBER = {4038091},
MRREVIEWER = {Wolfgang Globke},
       DOI = {10.1007/s00031-019-09527-6},
       URL = {https://doi.org/10.1007/s00031-019-09527-6},
}

@article {Pecastaingdeux,
    AUTHOR = {Pecastaing, Vincent},
     TITLE = {Conformal actions of higher rank lattices on compact
              pseudo-{R}iemannian manifolds},
   JOURNAL = {Geom. Funct. Anal.},
  FJOURNAL = {Geometric and Functional Analysis},
    VOLUME = {30},
      YEAR = {2020},
    NUMBER = {3},
     PAGES = {955--987},
      ISSN = {1016-443X},
   MRCLASS = {22E40 (37C40 37D25)},
  MRNUMBER = {4135675},
MRREVIEWER = {Thomas Ward},
       DOI = {10.1007/s00039-020-00537-1},
       URL = {https://doi.org/10.1007/s00039-020-00537-1},
}

@article {Zimmerun,
    AUTHOR = {Zimmer, Robert J.},
     TITLE = {Split rank and semisimple automorphism groups of
              {$G$}-structures},
   JOURNAL = {J. Differential Geom.},
  FJOURNAL = {Journal of Differential Geometry},
    VOLUME = {26},
      YEAR = {1987},
    NUMBER = {1},
     PAGES = {169--173},
      ISSN = {0022-040X},
   MRCLASS = {22E15 (22D45 22E40 53C10)},
  MRNUMBER = {892035},
MRREVIEWER = {Rolf Sulanke},
       URL = {http://projecteuclid.org/euclid.jdg/1214441180},
}

@article{BDRZ1,
title = {Pseudo-Conformal actions of the Mobius group},
journal = {Differential Geometry and its Applications},
volume = {91},
pages = {102070},
year = {2023},
issn = {0926-2245},
doi = {https://doi.org/10.1016/j.difgeo.2023.102070},
author = {M. Belraouti and M. Deffaf and Y. Raffed and A. Zeghib},

}

@article{zeghib2025semiriemannianmetricscompactsimple,
      title={Semi-Riemannian metrics on compact simple Lie Groups}, 
      author={Abdelghani Zeghib},
      year={2025},
      eprint={2505.10635},
      archivePrefix={arXiv},
      primaryClass={math.DG},
      note = {\url{https://arxiv.org/abs/2505.10635}}, 
}

\end{document}